\newcounter{dummy}
\newcommand\myitem[1][]{\item[#1]\refstepcounter{dummy}\def\@currentlabel{#1}}
\let\oldtocsection=\tocsection
\let\oldtocsubsection=\tocsubsection
\let\oldtocsubsubsection=\tocsubsubsection
\renewcommand{\tocsection}[2]{\hspace{0em}\vspace{0.5mm}\oldtocsection{#1}{#2}\vspace{0.5mm}}
\renewcommand{\tocsubsection}[2]{\hspace{2em}\vspace{0.25mm}\oldtocsubsection{#1}{#2}\vspace{0.25mm}}
\renewcommand{\tocsubsubsection}[2]{\hspace{2em}\oldtocsubsubsection{#1}{#2}}
\numberwithin{equation}{section}
\newtheorem{theorem}{Theorem}[section]
\newtheorem{lemma}[theorem]{Lemma}
\newtheorem{proposition}[theorem]{Proposition}
\newtheorem{corollary}[theorem]{Corollary}
\theoremstyle{definition}
\newtheorem{example}[theorem]{Example}
\newtheorem{remark}[theorem]{Remark}
\newtheorem{definition}[theorem]{Definition}
\newtheorem{problem}[theorem]{Problem}
\newcommand{\be}{\begin{equation}}
\newcommand{\ee}{\end{equation}}
\newcommand{\bes}{\begin{equation*}}
\newcommand{\ees}{\end{equation*}}
\newcommand{\cC}{\mathcal{C}}
\newcommand{\cD}{\mathcal{D}}
\newcommand{\cF}{\mathcal{F}}
\newcommand{\cH}{\mathcal{H}}
\newcommand{\cL}{\mathcal{L}}
\newcommand{\cP}{\mathcal{P}}
\newcommand{\cQ}{\mathcal{Q}}
\newcommand{\cV}{\mathcal{V}}
\newcommand{\cW}{\mathcal{W}}
\newcommand{\cX}{\mathcal{X}}
\newcommand{\cY}{\mathcal{Y}}
\newcommand{\bB}{\mathbb{B}}
\newcommand{\bC}{\mathbb{C}}
\newcommand{\bD}{\mathbb{D}}
\newcommand{\bN}{\mathbb{N}}
\newcommand{\bQ}{\mathbb{Q}}
\newcommand{\bR}{\mathbb{R}}
\newcommand{\bS}{\mathbb{S}}
\newcommand{\bT}{\mathbb{T}}
\newcommand{\bZ}{\mathbb{Z}}
\newcommand{\dist}{\operatorname{dist}}
\newcommand{\spn}{\operatorname{span}}
\newcommand{\fg}{{\mathfrak{g}}}
\newcommand{\fI}{{\mathfrak{I}}}
\newcommand{\foral}{\text{ for all }}
\newcommand{\AND}{\text{ and }}
\begin{document}


\title{Jointly cyclic polynomials and maximal domains}

\author{
Mikhail Mironov
}
\address{
\hspace{-5mm} Department of Mathematics \\
Technion - Israel Institute of Technology \\
Haifa, Israel
}
\email{
mikhailm@campus.technion.ac.il
}

\author{
Jeet Sampat
}
\address{
\hspace{-5mm} Department of Mathematics \\
Technion - Israel Institute of Technology \\
Haifa, Israel
}
\email{
sampatjeet@campus.technion.ac.il
}

\subjclass[2020]{32A10, 46E10, 47A16}

\begin{abstract}
    For a (not necessarily locally convex) topological vector space \texorpdfstring{$\cX$}{X} of holomorphic functions in one complex variable, we show that the shift invariant subspace generated by a set of polynomials is \texorpdfstring{$\cX$}{X} if and only if their common vanishing set contains no point at which the evaluation functional is continuous. For two variables, we show that this problem can be reduced to determining the cyclicity of a single polynomial and obtain partial results for more than two variables. We proceed to examine the maximal domain, i.e., the set of all points for which the evaluation functional is continuous. When \texorpdfstring{$\cX$}{X} is metrizable, we show that the maximal domain must be an \texorpdfstring{$F_\sigma$}{F-sigma} set, and then construct Hilbert function spaces on the unit disk whose maximal domain is the disk plus an arbitrary subset of the boundary that is both \texorpdfstring{$F_\sigma$}{F-sigma} and \texorpdfstring{$G_\delta$}{G-delta}.
    
    \medskip
    
    \noindent \textbf{Keywords.} Hardy spaces, Dirichlet-type spaces, shift operator, cyclic vectors, maximal domains.
\end{abstract}

\maketitle

\section{Introduction}\label{sec:intro}

\subsection{Cyclicity}\label{subsec:cyclicity}

Let $T$ be a bounded linear self map on a Banach space $\cX$. A vector $v \in \cX$ is said to be \emph{$T$-cyclic} if its \emph{$T$-invariant subspace} is $\cX$, i.e,
\begin{equation*}
    T[v] := \overline{\spn} \left\{ v, Tv, T^2 v, \dots \right\} = \cX.
\end{equation*}
Here, $\overline{\spn}$ denotes the closed linear hull in $\cX$. For general infinite dimensional spaces $\cX$, the problem of characterizing $T$-cyclic vectors is quite tricky.

Consider the simplest infinite dimensional space, which is $\ell^2(\bC)$ -- the Hilbert space of all square-summable complex sequences -- along with the \emph{shift operator}
\begin{equation*}
    S : (a_0,a_1,\dots) \mapsto (0, a_0, a_1, \dots).
\end{equation*}
A classical result of Beurling (see \cite[Theorem I]{Beu49}) shows that in order to characterize the shift cyclic vectors in $\ell^2(\bC)$, we must consider the corresponding function space, i.e., the \emph{Hardy space} $H^2(\bD)$ of holomorphic functions on the unit disk $\bD$ with square-summable power-series coefficients. Beurling showed that $f \in H^2(\bD)$ corresponds to a shift-cyclic vector if and only if it is \emph{outer}, i.e.,
\begin{equation}\label{eqn:outer.func}
    -\infty < \log |f(0)| = \int_0^{2\pi} \log |f(e^{it})| \frac{dt}{2\pi}.
\end{equation}

Such a classification is not known to exist even for function spaces in one complex variable, such as the \emph{Dirichlet space} $\cD$ (see \cite[Question 12]{BS84}) and the \emph{Bergman space} $L^2_a(\bD)$ (see \cite[Section 7.5]{HKZ00}), which we shall introduce below.
While a complete characterization of cyclic functions seems out of reach in general function spaces, some concrete results about the cyclicity of polynomials have been known for quite some time. Let us survey some of these results to bring context into our work.

\subsection{Motivation}\label{subsec:motivation}

Let $\Omega \subset \bC^d$ be an open set for some $d \in \bN$ and denote $\operatorname{Hol(\Omega)}$ to be the collection of all holomorphic functions on $\Omega$. Henceforth, $\cX \subset \operatorname{Hol}(\Omega)$ will represent a Hausdorff topological vector space (TVS) that satisfies:

\begin{enumerate}
    \myitem[\textbf{P1}] The set of \emph{polynomials $\cP_d$ in $d$ complex variables} is dense in $\cX$. \label{item:P1}
    \myitem[\textbf{P2}] The \emph{evaluation functional} $\Lambda_w$ at each $w \in \Omega$ is continuous, where \label{item:P2}
    \begin{equation*}
        \Lambda_w(F) := F(w) \foral F \in \cX.
    \end{equation*}
    \myitem[\textbf{P3}] The \emph{shift operator} $S_j$ for each $1 \leq j \leq d$ is continuous, where \label{item:P3}
    \begin{equation*}
        S_j F(z) := z_j F(z) \foral F \in \cX.
    \end{equation*}
\end{enumerate}

We write $\cX^*$ for the space of continuous linear functionals on $\cX$ and $\cL(\cX)$ for the space of continuous linear self maps on $\cX$. Note that the following notion of cyclicity generalizes that of shift-cyclicity in the one variable case.

\begin{definition}\label{def:shift-cyc.func}
    Let $\cX$ be as above.
    \begin{enumerate}
        \item For $F \in \cX$, we define its \emph{invariant subspace} as
        \begin{equation*}
            S[F] := \overline{F \cP_d} = \overline{\left\{PF : P \in \cP_d\right\}}.
        \end{equation*}
        \item $F \in \cX$ is said to be \emph{cyclic} if $S[F] = \cX$.
        \item A family $\cF \subset \cX$ is said to be \emph{jointly cyclic} if $S[\cF] = \cX$, where
\begin{equation*}
    S[\cF] := \overline{\spn}_{F \in \cF} S[F].
\end{equation*}
    \end{enumerate}
\end{definition}

Let us mention a few examples of Hausdorff TVSs of holomorphic functions.

\subsubsection{\texorpdfstring{\textbf{Hardy spaces}}{Hardy spaces}}

Let $\bD^d := \bD \times \dots \times \bD$ be the \emph{unit polydisk} and $\bT^d := \bT \times \dots \times \bT$ be the \emph{unit $d$-torus} in $\bC^d$, where $\bT$ is the \emph{unit circle} in $\bC$. Let $\sigma_d$ be the \emph{normalized Lebesgue measure on $\bT^d$} and define the \emph{Hardy spaces on $\bD^d$} for any $0 < p < \infty$ as
\begin{equation*}
    H^p(\bD^d) := \left \{ F \in \operatorname{Hol}(\bD^d) : \sup_{r < 1} \int_{\bT^d} |F(rw)|^p d\sigma_d(w) < \infty \right \}.
\end{equation*}
$H^p(\bD^d)$ turns into a Banach space for $1 \leq p < \infty$ and an $F$-space for $0 < p < 1$, and satisfies \ref{item:P1}-\ref{item:P3} for all $0 < p < \infty$. For $d = 1$, cyclic functions are characterized by \eqref{eqn:outer.func} as well (see \cite[Theorem 7.4]{Dur70} for $1 \leq p < \infty$ and \cite[Theorem 4]{Gam66} for $0 < p < 1$). For $d > 1$, one can generalize \eqref{eqn:outer.func} in a natural way, but it is not equivalent to cyclicity (see \cite[Theorem 4.4.8]{Rud69}).
\cite[Theorem 5]{NGN70} shows that a polynomial is cyclic in $H^p(\bD^d)$ if and only if it is non-vanishing on $\bD^d$. We shall see a general theme that the zero set of a polynomial is closely linked to its cyclicity.

\subsubsection{\texorpdfstring{\textbf{Dirichlet-type spaces}}{Dirichlet-type spaces}}

Every $F \in \operatorname{Hol}(\bD^d)$ has a power-series representation
\begin{equation*}
    F(z) = \sum_{\alpha \in \bZ_+^d} c_\alpha z^\alpha.
\end{equation*}
Here, $\alpha = (\alpha_1,\dots,\alpha_d)$ is a $d$-tuple of non-negative integers $\bZ_+$ and $z^\alpha := z_1^{\alpha_1}\dots z_d^{\alpha_d}$. For any $t \in \bR$, we define the \emph{Dirichlet-type spaces on $\bD^d$} as
\begin{equation*}
    \cD_t(\bD^d) := \left\{ F(z) = \sum_{\alpha \in \bZ_+^d} c_\alpha z^\alpha : \sum_{\alpha \in \bZ_+^d} \left(\prod_{j=1}^d (\alpha_j + 1)^t\right) |c_\alpha|^2 < \infty \right\}.
\end{equation*}
It is easy to check that $\cD_t(\bD^d)$ is a Hilbert space that satisfies \ref{item:P1}-\ref{item:P3} over $\bD^d$. For $t = 0$ we recover the Hardy space $H^2(\bD^d)$, for $t = 1$ we get the usual \emph{Dirichlet space} $\cD(\bD^d)$, and for $t = -1$ we get the \emph{Bergman space} $L^2_a(\bD^d)$.

Cyclic functions in $\cD_t(\bD^d)$ remain mysterious even when $d = 1$, but for $d = 1, 2$ we know what the cyclic polynomials are. The main result of \cite{BKKLSS16} shows that $P \in \cP_2$ is cyclic in $\cD_t(\bD^2)$ if and only if
\begin{enumerate}
    \item $P$ is non-vanishing on $\bD^2$ when $t \leq 1/2$,
    \item $P$ is non-vanishing on $\bD^2$ and has at most finitely many zeroes in $\bT^2$, or $P$ is a multiple of $z_1 - w$ or of $z_2 - w$ for some $w \in \bT$ when $1/2 < t \leq 1$.
    \item $P$ is non-vanishing on $\overline{\bD^2}$ when $t > 1$.
\end{enumerate}
The problem of determining cyclic polynomials in $\cD_t(\bD^d)$ for $d > 2$ is still open.

For a similar class of spaces $\cD_t(\bB_d)$ over the \emph{Euclidean unit ball} $\bB_d \subset \bC^d$, cyclic polynomials are determined for $d=2$ in \cite[Theorem 3]{KV23}, but the problem remains open for $d > 2$ (see \cite{VZ24}). Similar results about cyclicity for other spaces in one and several variables can be found in \cite{APRSS24, BH97, CG03, CMR20, EKMR14, KLRS19, Nik12, ST21}.

\subsection{Main results}\label{subsec:main.results}


Let $\cX$ be a Hausdorff TVS satisfying \ref{item:P1}-\ref{item:P3} on an open set $\Omega \subset \bC^d$. Following \cite{Sam21}, we introduce the \emph{maximal domain} $\Omega_{max}$, which consists of all $w \in \bC^d$ such that the evaluation functional $\Lambda_w : P \mapsto P(w)$ defined on polynomials extends to an element of $\cX^*$. We also consider the \emph{enveloping domain of cyclic polynomials} $\Omega_{env}$, which consists of all $w \in \bC^d$ such that no cyclic polynomial vanishes at $w$.

\subsubsection{\texorpdfstring{\textbf{Section \ref{sec:dom.assoc.with.cyc}}}{Section 2}}

In Theorem \ref{thm:max.dom.is.point.spec.of.adjoint}, we show that $\Omega_{max} = \sigma_p(S^*)$, where $\sigma_p(S^*)$ is the set of all \emph{joint eigenvalues} of the topological adjoints of the shift operators. In Proposition \ref{prop:max.dom.in.env.cyc.poly}, we show that $\Omega_{max} \subseteq \Omega_{env}$, which happens to be an equality for the spaces introduced in Section \ref{subsec:motivation} (see Examples \ref{example:find.max.dom.1} and \ref{example:find.max.dom.2}).
In later sections, we tackle the question of determining when equality above holds, but are left with an open problem (see Problem \ref{prob:max.dom.equal.env.cyc.poly}).

\subsubsection{\texorpdfstring{\textbf{Section \ref{sec:joint.cyc.poly}}}{Section 3}} In Theorem \ref{thm:description.max.dom}, we show that $\Omega_{max}$ is precisely the collection of all $w \in \bC^d$ for which the family
\begin{equation*}
    \cF_w := \left\{ z_j - w_j : 1 \leq j \leq d \right\}
\end{equation*}
is not jointly cyclic. For $d = 1$, we combine this theorem with basic facts from commutative algebra to obtain Corollary \ref{cor:joint.cyc.poly.1-var}, which states that $\cF \subset \cP_1$ is jointly cyclic if and only if its common vanishing set $V(\cF)$ is disjoint from $\Omega_{max}$. In Corollary \ref{cor:joint.cyc.in.2-var}, we show that $\cF \subset \cP_2$ is jointly cyclic if and only if the G.C.D. of $\cF$ is cyclic and $V(\cF) \cap \Omega_{max} = \emptyset$. In particular, we obtain a classification of jointly cyclic polynomials in $H^p(\bD^2)$, $\cD_t(\bD^2)$ and $\cD_t(\bB_2)$.
This result follows from Theorem \ref{thm:joint.cyc.poly}, which also provides a partial result for $d > 2$. These results generalize previously known results for various Banach function spaces for which $\Omega_{max} = \Omega$ or $\overline{\Omega}$ (cf. \cite[Chapter 2]{CG03}, \cite[Section 10.1]{CMR20} and \cite[Sections 9.1 and 9.2]{EKMR14}).

\subsubsection{\texorpdfstring{\textbf{Section \ref{sec:classify.max.dom}}}{Section 4}}

Given the relationship between $\Omega_{max}$ and cyclic polynomials, it is natural to ask whether we could identify subsets of $\bC^d$ that appear as $\Omega_{max}$ for some choice of $\cX$.
In Theorem \ref{thm: maximal domain is F-sigma}, we show that if $\cX$ is metrizable then $\Omega_{max}$ is always an $F_\sigma$ set (countable union of closed sets). In Theorem \ref{thm: Fs and Gd Gamma}, we construct Hilbert function spaces $\cH$ that satisfy \ref{item:P1}-\ref{item:P3} over $\bD$ and for which $\Omega_{max} = \bD \cup \Gamma$, where $\Gamma \subseteq \bT$ can be chosen to be any set that is both $F_\sigma$ and $G_\delta$ (countable intersection of open sets). We
leave open the problem to determine if $\Gamma$ above can be chosen to be any $F_\sigma$ set (see Problem \ref{prob:dichotomy.max.dom}). For locally convex TVSs, we show that this can be achieved in Remark \ref{rem:loc.con.sp.with.F_sigma.max.dom}.

\section{Domains associated with cyclicity}\label{sec:dom.assoc.with.cyc}


\subsection{Maximal domain}

Let $\cX$ be a Hausdorff TVS of holomorphic functions on an open set $\Omega \subset \bC^d$ satisfying \ref{item:P1}-\ref{item:P3}. Define the maximal domain of $\cX$ as
\begin{equation*}
    \Omega_{max} := \left\{ w \in \bC^d : \Lambda_w(P) := P(w) \foral P \in \cP_d \text{ extends to } \Lambda_w \in \cX^* \right\}.
\end{equation*}
It was established in \cite{Sam21} that -- at least when $\cX$ is a Banach space -- $\Omega_{max}$ sits inside the \emph{right Harte spectrum} $\sigma_R(S)$ of the $d$-tuple of the shift operators. However, we now understand that the maximal domain is precisely the set of \emph{joint eigenvalues} of the $d$-tuple of the topological adjoints\footnote{The topological adjoint $T^* \in \cL(\cX^*)$ of $T$ is given by $T^*\Lambda(x) = \Lambda(Tx) \foral x \in \cX, \, \Lambda \in \cX^*$.} of $S$. Recall that for a $d$-tuple of operators $T = (T_1,\dots,T_d)$ on some space $\cY$, the set of its joint eigenvalues is defined as
\begin{equation*}
    \sigma_p(T) := \left\{ w \in \bC^d : T_j h = w_j h, \, \forall 1 \leq j \leq d \text{ for some } 0 \neq h \in \cY \right\}
\end{equation*}

\begin{theorem}\label{thm:max.dom.is.point.spec.of.adjoint}
    If $S^* := (S_1^*,\dots,S_d^*)$, then
    \begin{equation}\label{eqn:max.dom.is.point.spec.of.adjoint}
        \Omega \subseteq \Omega_{max} = \sigma_p(S^*) \subseteq \sigma_R(S).
    \end{equation}
\end{theorem}

\begin{proof}
    \ref{item:P2} implies that $\Omega \subseteq \Omega_{max}$, and it is easy to check that $\sigma_p(T^*) \subseteq \sigma_R(T)$ always holds for any $T$. Thus, we only need to verify the equality in the middle.
    
    Suppose $w \in \Omega_{max}$. For each $P \in \cP_d$ and $1 \leq j \leq d$, note that
    \begin{equation}\label{eqn:Lambda_w.is.multiplicative}
        S_j^* \Lambda_w (P) = \Lambda_w(S_j P) = \Lambda_w(z_j P) = w_j P(w) = w_j \Lambda_w(P).
    \end{equation}
    It follows from \ref{item:P1} that $w \in \sigma_p(S^*)$ with joint eigenvector $\Lambda_w$.

    Conversely, suppose $w \in \sigma_p(S^*)$ and let $0 \neq \Lambda \in \cX^*$ be a joint eigenvector for $w$. Note that for each $P \in \cP_d$ we have
    \begin{equation*}
        \Lambda(P) = \Lambda(P(S) 1) = (P(S^*) \Lambda)1 = P(w)\Lambda(1).
    \end{equation*}
    Thus, $\Lambda(1) \neq 0$ and $\Lambda_w \vert_{\cP_d} = \frac{1}{\Lambda(1)} \Lambda \vert_{\cP_d}$ extends continuously to $\cX$, as required.
\end{proof}

\begin{remark}\label{rem:top.adjoint.vs.Hilbert.sp.adjoint}
    When $\cX$ is a Hilbert space, the topological and the Hilbert space adjoint of an operator are connected by an anti-linear mapping. So if $S^*$ is the $d$-tuple of the Hilbert space adjoints of $S_j$'s, then
    \begin{equation*}
        \Omega_{max} = \sigma_p(S^*)^* := \left\{ w \in \bC^d : w^* = (\overline{w_1},\dots,\overline{w_d}) \in \sigma_p(S^*) \right\}.
    \end{equation*}
\end{remark}

\begin{example}\label{example:find.max.dom.1}
    Let $\cX$ be a Banach space, and note that one typically obtains $\sigma_R(S) = \overline{\Omega}$. Two common situations arise when computing $\Omega_{max}$ in this case.

    \begin{enumerate}
        \item Let $\cX = \cD_t(\bD^d)$ with $t > 1$ or $\cD_t(\bB_d)$ with $t > d$. It is easy to check that $\cX$ is an algebra, and that
        point-evaluations are bounded even for $w \in \partial \Omega$. Thus, by Theorem \ref{thm:max.dom.is.point.spec.of.adjoint}, it follows that $\Omega_{max} = \overline{\Omega}$.

        \item Let $r \Omega \subset \Omega$ for all $r < 1$. Suppose $F \in \cX$ has the property that the \emph{radial dilations} $F_r : z \mapsto F(rz)$ converge to $F$ in the norm as $r \to 1$, and that each $F_r$ can be approximated by polynomials $P_{r,k}$ uniformly on $\overline{\Omega}$ and also in the norm. If $w \in \Omega_{max} \cap \partial \Omega$, then for such an $F$ we get
        \begin{equation*}
            \Lambda_{w}(F) = \lim_{r \to 1} \Lambda_w(F_r) = \lim_{r \to 1} \lim_{k \to \infty} P_{r,k}(w) = \lim_{r \to 1} F(rw).
        \end{equation*}
        If, for some $w \in \partial \Omega$, we can find $F \in \cX$ as above such that $\lim_{r \to 1} F(rw)$ does not exist, then we can conclude that $w \not\in \Omega_{max}$.

        For every $F \in H^p(\bD^d)$, both approximation properties hold (even for $p < 1$) (see \cite[Section 3.4]{Rud69}). Moreover, given any $w \in \overline{\bD^d}$ with $w_j \in \bT$ (say), we choose $F(z) = \log (z_j - w_j)$ to be as above. Thus, $\Omega_{max} = \bD^d$.
    \end{enumerate}
\end{example}

The argument surrounding \eqref{eqn:Lambda_w.is.multiplicative} above can be easily generalized using \ref{item:P3} to obtain a partial multiplicative property of the functionals $\Lambda_w$ where $w \in \Omega_{max}$. We leave the easy proof of this fact and its corollary for the reader to verify.

\begin{proposition}\label{prop.point-evals.are.multiplicative}
    For each $P \in \cP_d$ and $F \in \cX$, we have
    \begin{equation*}
        \Lambda_w(PF) = P(w) \Lambda_w(F) \foral w \in \Omega_{max}.
    \end{equation*}
\end{proposition}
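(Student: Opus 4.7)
The plan is to mimic the argument that appeared in the proof of Theorem \ref{thm:max.dom.in.right.Harte.spec} around equation \eqref{eqn:Lambda_w.is.multiplicative}, which already handled the case $P = z_j$ with $F$ restricted to $\cP_d$, and then bootstrap it to arbitrary polynomials and arbitrary $F \in \cX$ using the density axiom \ref{item:P1} and the continuity axiom \ref{item:P3}.

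First I would fix $w \in \Omega_{max}$ and, for each index $1 \leq j \leq d$, consider the two linear functionals
\begin{equation*}
    \varphi_j : F \mapsto \Lambda_w(S_j F), \qquad \psi_j : F \mapsto w_j \Lambda_w(F).
\end{equation*}
Since $S_j \in \cL(\cX)$ by \ref{item:P3} and $\Lambda_w \in \cX^*$ by the definition of $\Omega_{max}$, both $\varphi_j$ and $\psi_j$ lie in $\cX^*$. On any polynomial $Q \in \cP_d$ we have $\varphi_j(Q) = (z_j Q)(w) = w_j Q(w) = \psi_j(Q)$, so they agree on $\cP_d$. By \ref{item:P1}, $\cP_d$ is dense in $\cX$, and two continuous linear functionals that coincide on a dense subset must coincide everywhere. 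Therefore
\begin{equation*}
    \Lambda_w(z_j F) = w_j \Lambda_w(F) \qquad \text{for all } F \in \cX, \ 1 \leq j \leq d.
\end{equation*}

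Next I would iterate this identity. For any multi-index $\alpha = (\alpha_1, \ldots, \alpha_d) \in \bZ_+^d$, a straightforward induction on $|\alpha| = \alpha_1 + \cdots + \alpha_d$, applying the single-variable identity above at each step to the function $z^{\alpha - e_j} F \in \cX$, yields
\begin{equation*}
    \Lambda_w(z^\alpha F) = w^\alpha \Lambda_w(F) \qquad \text{for all } F \in \cX.
\end{equation*}
Finally, writing $P \in \cP_d$ as a finite linear combination $P = \sum_\alpha c_\alpha z^\alpha$ and using linearity of $\Lambda_w$, we get $\Lambda_w(PF) = \sum_\alpha c_\alpha w^\alpha \Lambda_w(F) = P(w) \Lambda_w(F)$, which is the desired identity.

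I do not expect any genuine obstacle here: the only nontrivial ingredients are the density of $\cP_d$ and the continuity of the $S_j$, both of which are built into the standing assumptions \ref{item:P1}--\ref{item:P3}. The proof is essentially a verbatim repetition of the density argument from Theorem \ref{thm:max.dom.in.right.Harte.spec}, now applied to arbitrary $F \in \cX$ rather than to the constant function $1$, followed by an elementary induction and linear extension.
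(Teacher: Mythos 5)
Your proof is correct and follows exactly the route the paper intends: the paper leaves this as an exercise, explicitly describing it as the generalization of the density argument surrounding \eqref{eqn:Lambda_w.is.multiplicative}, which is precisely what you carry out (with the induction and linear extension spelled out). No issues.
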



\begin{corollary}\label{cor:cyc.necessary.condition}
    If $F \in \cX$ is cyclic, then
    \begin{equation*}
        \Lambda_w(F) \neq 0 \foral w \in \Omega_{max}.
    \end{equation*}
\end{corollary}

\begin{remark}\label{remark:algebraic.consistency}
    The above partial multiplicative property of $\Lambda_w$ in Proposition \ref{prop.point-evals.are.multiplicative} is why the name `maximal domain' is appropriate for $\Omega_{max}$. This is because different notions of maximal domains have been studied using different kinds of partially multiplicative functionals (cf. \cite[Section 5]{Har17} and \cite[Section 2]{MS17}). However, if $\cX$ is a Banach space then all of these notions are equivalent (see \cite[Theorem 1.7]{Sam21}).
\end{remark}

\subsection{Enveloping domain of cyclic polynomials}\label{subsec:env.cyc.poly}

Let $\cC_\cP$ be the class of cyclic polynomials in $\cX$ and define the enveloping domain of cyclic polynomials as
\begin{equation*}
    \Omega_{env} := \left\{ w \in \bC^d : P(w) \neq 0 \foral P \in \cC_\cP \right\}.
\end{equation*}

\begin{proposition}\label{prop:max.dom.in.env.cyc.poly}
    $\Omega_{max} \subseteq \Omega_{env}$.
\end{proposition}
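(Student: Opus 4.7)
The statement is essentially a one-line consequence of Corollary \ref{cor:cyc.necessary.condition}, so the plan is to deduce it by specializing that corollary from cyclic vectors to cyclic polynomials. Fix $w \in \Omega_{max}$ and an arbitrary cyclic polynomial $P \in \cC_\cP$. Since $P \in \cP_d \subset \cX$ and $P$ is cyclic as an element of $\cX$, Corollary \ref{cor:cyc.necessary.condition} gives $\Lambda_w(P) \neq 0$. But for polynomials $\Lambda_w(P) = P(w)$ by definition, so $P(w) \neq 0$. As $P$ was arbitrary in $\cC_\cP$, this is exactly the defining condition for $w \in \Omega_{env}$, and we are done.

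If one prefers to avoid invoking the corollary directly, the same argument can be unpacked: the cyclicity of $P$ means $S[P] = \cX$, so in particular $1 \in \overline{P \cP_d}$; pick a net $(Q_\alpha) \subset \cP_d$ with $P Q_\alpha \to 1$ in $\cX$. Apply the continuous functional $\Lambda_w \in \cX^*$ (which exists precisely because $w \in \Omega_{max}$) and invoke the partial multiplicativity from Proposition \ref{prop.point-evals.are.multiplicative} to get
\begin{equation*}
    P(w)\,\Lambda_w(Q_\alpha) = \Lambda_w(P Q_\alpha) \longrightarrow \Lambda_w(1) = 1,
\end{equation*}
which forces $P(w) \neq 0$. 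There is no real obstacle here; the proof is genuinely immediate once the multiplicative behavior of $\Lambda_w$ on $\Omega_{max}$ has been established, which is the content of the preceding proposition.
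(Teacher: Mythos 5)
Your proof is correct and follows exactly the paper's route: the first paragraph is verbatim the paper's one-line deduction from Corollary \ref{cor:cyc.necessary.condition}, and the unpacked version is just the proof of that corollary via Proposition \ref{prop.point-evals.are.multiplicative}. Nothing to add.
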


\begin{proof}
    Using Corollary \ref{cor:cyc.necessary.condition}, if $w \in \Omega_{max}$ then $P(w) \neq 0$ for any $P \in \cC_\cP$.
\end{proof}

One can typically identify $\Omega_{env}$ using a special class of cyclic polynomials.

\begin{example}\label{example:find.max.dom.2}
    For $H^p(\bD^d)$ with $0 < p < \infty$ and $\cD_t(\bD^d)$ with $t \leq 1$, we know that $z_j - w$ is cyclic if and only if $w \in \bC \setminus \bD$. It follows that $\Omega_{env} \subseteq \bD^d$ and thus, $\Omega_{max} = \Omega_{env} = \bD^d$ in this case. The same family shows that for $\cD_t(\bB_d)$ with $t \leq d$, we have $\Omega_{max} = \Omega_{env} = \bB_d$ (use \cite[Theorem 11 and Lemma 15]{Vav23}).
    
    For $\cD_t(\bD^d)$ with $t > 1$ or $\cD_t(\bB_d)$ with $t > d$, $z_j - w$ is cyclic if and only if $w \in \bC \setminus \overline{\bD}$. Thus, using Example \ref{example:find.max.dom.1} $(1)$, we have $\Omega_{max} = \Omega_{env} = \overline{\bD^d}$ in this case.
\end{example}

This leads us to a question that we are unable to answer for an arbitrary $\cX$. In Section \ref{sec:joint.cyc.poly}, we show that the answer is always affirmative when $d = 1$.

\begin{problem}\label{prob:max.dom.equal.env.cyc.poly}
    If $\cX$ is a Banach/Hilbert space, is it always true that $\Omega_{max} = \Omega_{env}$?
\end{problem}

We end this section with an algebraic property of cyclic polynomials that is well known for Banach spaces. We provide a short proof in our general setup.

\begin{proposition}\label{prop:alg.prop.of.cyc.poly}
    If $P \in \cP_d, F \in \cX$, then $PF$ is cyclic if and only if $P, F$ are both cyclic. Consequently, $P \in \cC_\cP$ if and only if its irreducible factors are cyclic.
\end{proposition}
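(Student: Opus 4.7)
The plan hinges on a single observation: by \ref{item:P3}, for any $P \in \cP_d$ the multiplication operator $M_P := P(S_1,\dots,S_d)$ lies in $\cL(\cX)$, since it is a polynomial expression in the commuting continuous shifts. Combined with \ref{item:P1}, continuity of $M_P$ gives the useful \emph{density upgrade}
\begin{equation*}
    \overline{P\cP_d} = \overline{P\cX},
\end{equation*}
because polynomials are dense in $\cX$ and $M_P$ intertwines density: if $Q_n \to G$ in $\cX$ then $PQ_n \to PG$, so $P\cX \subseteq \overline{P\cP_d}$, while the reverse inclusion is trivial. In particular, \emph{$P$ is cyclic if and only if $P\cX$ is dense in $\cX$}.

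With this in hand I would argue the two directions of the equivalence separately. For $(\Leftarrow)$, if both $P$ and $F$ are cyclic, then $F\cP_d$ is dense in $\cX$, so by continuity of $M_P$,
\begin{equation*}
    S[PF] = \overline{PF\cP_d} = \overline{M_P(F\cP_d)} \supseteq \overline{M_P(\cX)} = \overline{P\cX} = \cX,
\end{equation*}
where the last equality uses cyclicity of $P$ and the density upgrade. For $(\Rightarrow)$, assume $PF$ is cyclic. Cyclicity of $F$ is immediate from the inclusion $PF\cP_d \subseteq F\cP_d$ (since $P\cP_d \subseteq \cP_d$), which forces $S[F] \supseteq S[PF] = \cX$. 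Cyclicity of $P$ follows from the inclusion $PF\cP_d \subseteq P\cX$: the left-hand side is already dense in $\cX$, hence so is $P\cX$, and the density upgrade then yields that $P\cP_d$ is dense in $\cX$.

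For the consequence about irreducible factors, factor $P = P_1 \cdots P_k$ in the unique factorization domain $\cP_d$ and induct: applying the just-proved equivalence with $F = P_2 \cdots P_k$ reduces the number of factors by one, and iterating gives that $P$ is cyclic if and only if every $P_j$ is cyclic. The only nonroutine step in the whole argument is the density upgrade used to recover cyclicity of $P$ in the reverse direction; it is precisely here that both \ref{item:P1} and the continuity of $M_P$ coming from \ref{item:P3} are indispensable.
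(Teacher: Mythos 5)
Your proof is correct and follows essentially the same route as the paper: the ``density upgrade'' $\overline{P\cP_d}=\overline{P\cX}$ is exactly the paper's identity $\overline{P\overline{\cY}}=\overline{P\cY}$ (a consequence of \ref{item:P1} and the continuity of multiplication by $P$ from \ref{item:P3}), and both directions plus the factorization argument match the paper's proof step for step. The only cosmetic remark is that in a general TVS one should phrase the approximation with nets rather than sequences, but the containment $M_P(\overline{\cY})\subseteq\overline{M_P(\cY)}$ you are really using holds by continuity regardless.
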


\begin{proof}
    By \ref{item:P3}, multiplication by $P \in \cP_d$ is a continuous linear map on $\cX$. Thus,
    \begin{equation}\label{eqn:cont.of.mult.by.poly}
        \overline{P \overline{\cY}} = \overline{P \cY} \foral \cY \subset \cX.
    \end{equation}
    
    If $P$ and $F$ are both cyclic, then
    \begin{equation*}
        S[PF] = \overline{PF \cP_d} = \overline{P \overline{F \cP_d}} = \overline{P \cX} \supseteq S[P] = \cX.
    \end{equation*}
    It follows that $PF$ is cyclic. Conversely, suppose $PF$ is cyclic.
    Clearly, $PF \in S[F]$ and we get that $S[PF] \subseteq S[F]$, so $F$ is cyclic. We also have
    \begin{equation*}
        PF \in P \overline{\cP_d} \subseteq \overline{P \cP_d} \subseteq S[P]
    \end{equation*}
    and thus, $S[PF] \subseteq S[P]$. This completes the proof.
\end{proof}

\begin{remark}\label{rem:env.in.Tay.spec}
    When $\cX$ is a Banach space, we can use the \emph{Taylor functional calculus} to show that $\Omega_{env}$ lies in the convex hull $\widehat{\Omega}$ of the \emph{Taylor joint spectrum} $\sigma_{Tay}(S)$ of $S$ (see \cite[Section 14.4]{AM02}). To see this, let $w \in \bC^d \setminus \widehat{\Omega}$ and use the Hahn-Banach separation theorem to find an affine map $P$ that vanishes at $w$ but is non-vanishing on $\widehat{\Omega}$. Then, $1/P \in \cX$ by the Taylor functional calculus and hence, applying Proposition \ref{prop:alg.prop.of.cyc.poly} with $F = 1/P$, we get that $P \in \cC_\cP$. Since $P$ was chosen so that $P(w) = 0$, it follows that $w \in \bC^d \setminus \Omega_{env}$.
\end{remark}

\section{Joint cyclicity of polynomials}\label{sec:joint.cyc.poly}

We now demonstrate the connection between maximal domains and cyclicity. Let us first showcase the one variable case to ease our way into the discussion.

\subsection{Cyclic polynomials in one variable}\label{subsec:cyc.poly.1-var}

The main result of this subsection follows from an alternate description of $\Omega_{max}$ in general. We introduce the family
    \begin{equation}\label{eqn:F_w}
        \cF_w := \left\{ z_j - w_j : 1 \leq j \leq d \right\} \foral w \in \bC^d.
    \end{equation}

\begin{theorem}\label{thm:description.max.dom}
    For all $w \in \bC^d$, we have
    \begin{equation}\label{eqn:X=1+inv.subspace}
        \cX = \bC 1 + S[\cF_w].
    \end{equation}
    Moreover, the following are equivalent:
    \begin{enumerate}
        \item $w \in \Omega_{max}$.
        \item $\cF_w$ is not jointly cyclic.
        \item $\cX = \mathbb{C}1 \oplus S[\cF_w]$ as an algebraic direct sum.
    \end{enumerate}
\end{theorem}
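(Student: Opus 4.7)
The plan is to first establish the ambient decomposition $\cX = \bC 1 + S[\cF_w]$, and then run the cycle of implications (1) $\Rightarrow$ (2) $\Leftrightarrow$ (3) $\Rightarrow$ (1).

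For the decomposition, given $P \in \cP_d$, a Taylor expansion at $w$ writes $P(z) - P(w) = \sum_{j=1}^d (z_j - w_j) Q_j(z)$ for some $Q_j \in \cP_d$, so $P \in P(w) \cdot 1 + S[\cF_w]$; thus $\cP_d \subseteq \bC 1 + S[\cF_w]$. By \ref{item:P1} it then suffices to verify that this sum is closed. Since $S[\cF_w]$ is closed by definition, the quotient $\cX / S[\cF_w]$ is a Hausdorff TVS, and the image of $\bC 1$ under the quotient map $\pi$ is a finite-dimensional subspace of this Hausdorff TVS, hence closed (a finite-dimensional Hausdorff TVS over $\bC$ carries its unique Euclidean topology and is in particular complete). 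Pulling back by the continuous $\pi$ shows $\bC 1 + S[\cF_w] = \pi^{-1}(\pi(\bC 1))$ is closed in $\cX$.

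For (1) $\Rightarrow$ (2), assume $w \in \Omega_{max}$. Proposition \ref{prop.point-evals.are.multiplicative} yields $\Lambda_w(P(z_j - w_j)) = P(w) \cdot 0 = 0$ for every $P \in \cP_d$ and every $j$, so by continuity $\Lambda_w$ annihilates $S[\cF_w]$. Since $\Lambda_w(1) = 1 \neq 0$, we get $1 \notin S[\cF_w]$, forcing $S[\cF_w] \neq \cX$. The equivalence (2) $\Leftrightarrow$ (3) is then immediate from the decomposition: $\cF_w$ is jointly cyclic iff $S[\cF_w] = \cX$ iff $1 \in S[\cF_w]$ iff $\bC 1 + S[\cF_w]$ fails to be a direct sum.

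The main work is (3) $\Rightarrow$ (1). Given the algebraic direct sum, I define $\Lambda_w : \cX \to \bC$ by $\Lambda_w(c \cdot 1 + F) := c$ for $c \in \bC$ and $F \in S[\cF_w]$; this is well-defined by directness, and agrees with polynomial evaluation at $w$ since $P = P(w)\cdot 1 + (P - P(w))$ with $P - P(w) \in S[\cF_w]$ from the Taylor step above. The subtle point is continuity, which I plan to obtain by factoring $\Lambda_w$ through the quotient $\cX / S[\cF_w]$: the quotient map $\pi$ is continuous, and because the sum is direct, the linear bijection $\bC \to \cX/S[\cF_w]$, $c \mapsto \pi(c\cdot 1)$, is a map between one-dimensional Hausdorff TVSs over $\bC$, hence a topological isomorphism. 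Composing $\pi$ with its inverse recovers $\Lambda_w$, giving continuity. The principal obstacle is precisely this final step: carrying out the quotient argument cleanly in the possibly non-locally-convex Hausdorff setting, which rests on the uniqueness of the Hausdorff vector space topology on finite-dimensional spaces over $\bC$.
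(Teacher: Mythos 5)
Your proposal is correct and follows essentially the same route as the paper: the same Taylor-expansion decomposition $P = P(w)\cdot 1 + \sum_j (z_j - w_j)Q_j$, the same cycle of implications, and the same observation that the direct-sum condition reduces to $1 \notin S[\cF_w]$. The only cosmetic difference is in $(3) \Rightarrow (1)$, where the paper invokes the criterion that a non-zero linear functional with closed kernel on a TVS is continuous (Theorem 1.18 in Rudin's \emph{Functional Analysis}), whereas you obtain the same conclusion by factoring through the one-dimensional Hausdorff quotient $\cX / S[\cF_w]$ --- two formulations of the same standard fact.
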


\begin{proof}
    \eqref{eqn:X=1+inv.subspace} follows at once by noting that its R.H.S. is a closed subspace of $\cX$ that contains $\cP_d$. This is because, for every $P \in \cP_d$ and $w \in \bC^d$, we can find $\{Q_1,\dots,Q_d\} \subset \cP_d$ such that
    \begin{equation}\label{eqn:P(z)=P(w)+blah}
        P(z) = P(w) + (z_1 - w_1)Q_1(z) + \dots + (z_d - w_d)Q_d(z).
    \end{equation}
    $(2) \Leftrightarrow (3)$ holds since the R.H.S. of \eqref{eqn:X=1+inv.subspace} is an algebraic direct sum if and only if
    \begin{equation*}
        \bC 1 \cap S[\cF_w] = \{ 0 \}.
    \end{equation*}
    $(1) \Rightarrow (2)$ holds by noting that if $\Lambda_w$ is continuous, then $\Lambda_w \equiv 0$ on $S[\cF_w]$ and thus, $1 \notin S[\cF_w]$. $(3) \Rightarrow (1)$ follows since the algebraic projection on $\bC 1$ satisfies
    \begin{equation*}
        \operatorname{Proj_{\bC 1}}(P) = \Lambda_w(P) \foral P \in \cP_d
    \end{equation*}
    using \eqref{eqn:P(z)=P(w)+blah}, and is a non-zero linear functional on $\cX$ whose kernel is closed, which is automatically continuous (see \cite[Theorem 1.18]{Rud73}). Here, we identify $\bC 1 \cong \bC$ to view $\operatorname{Proj}_{\bC 1}$ as a linear functional on $\cX$. This completes the proof.
\end{proof}

The main result now follows at once from Proposition \ref{prop:alg.prop.of.cyc.poly} and Theorem \ref{thm:description.max.dom}.

\begin{corollary}\label{cor:cyc.poly.in.1-var}
    If $\cX$ is a Hausdorff TVS of holomorphic functions on an open set $\Omega \subset \bC$ satisfying \ref{item:P1}-\ref{item:P3}, then
    \begin{equation*}
        \Omega_{max} = \left\{ w \in \bC : z- w \text{ is not cyclic} \right\} = \Omega_{env}.
    \end{equation*}

    Consequently, $P \in \cC_\cP$ if and only if $P(w) \neq 0 \foral w \in \Omega_{max}$.
\end{corollary}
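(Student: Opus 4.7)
My plan is to unpack the hint that the corollary ``follows at once'' from Proposition \ref{prop:alg.prop.of.cyc.poly} and Theorem \ref{thm:description.max.dom}, establishing the three claims in turn.

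For the first equality $\Omega_{max} = \{ w \in \bC : z - w \text{ is not cyclic} \}$, I would simply specialize Theorem \ref{thm:description.max.dom} to $d = 1$. In this case the family $\cF_w$ defined in \eqref{eqn:F_w} is the singleton $\{z - w\}$, so joint cyclicity of $\cF_w$ reduces to cyclicity of the polynomial $z - w$. The equivalence $(1) \Leftrightarrow (2)$ of Theorem \ref{thm:description.max.dom} then gives $w \in \Omega_{max}$ if and only if $z - w$ is not cyclic.

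For $\Omega_{max} = \Omega_{env}$, the containment $\Omega_{max} \subseteq \Omega_{env}$ is already Proposition \ref{prop:max.dom.in.env.cyc.poly}. For the reverse, I would argue by contrapositive: if $w \notin \Omega_{max}$, the first equality tells us $z - w$ is cyclic, so $z - w \in \cC_\cP$ is a cyclic polynomial vanishing at $w$, whence $w \notin \Omega_{env}$.

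Finally, for the characterization of cyclic polynomials, one direction is Corollary \ref{cor:cyc.necessary.condition}. For the converse, I would factor $P \in \cP_1$ over $\bC$ as $P(z) = c \prod_{i=1}^n (z - w_i)$. If $P(w) \neq 0$ for every $w \in \Omega_{max}$, then every root $w_i$ lies outside $\Omega_{max}$, and by the first equality each linear factor $z - w_i$ is cyclic. The non-zero constant $c$ is trivially cyclic, so repeatedly applying Proposition \ref{prop:alg.prop.of.cyc.poly} yields cyclicity of $P$. There is no real obstacle here; the only mild subtlety is just noting that joint cyclicity of a singleton is ordinary cyclicity, and that factoring over $\bC$ is what makes the one-variable case so clean compared to $d \geq 2$ (where irreducible factors need not be linear and the analogous step must be handled differently, as will be done in the next subsection).
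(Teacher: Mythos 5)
Your proposal is correct and is exactly the argument the paper intends: the paper gives no separate proof, stating only that the corollary follows at once from Proposition \ref{prop:alg.prop.of.cyc.poly} and Theorem \ref{thm:description.max.dom}, and your unpacking (singleton $\cF_w$ in $d=1$, Proposition \ref{prop:max.dom.in.env.cyc.poly} plus the contrapositive for $\Omega_{env}$, and factoring into linear factors for the last assertion) is the standard way to fill in those details.
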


Corollary \ref{cor:cyc.poly.in.1-var} generalizes several known results of this type (e.g. \cite[Theorem 9.2.1]{CMR20} and \cite[Lemma 10.1.3]{EKMR14}). Also, note that the final assertion of Corollary \ref{cor:cyc.poly.in.1-var} does not hold when $d > 1$ as illustrated by \cite[Example 2]{BCLSS15}, since $1 - z_1 z_2 \in \cD_t(\bD^2)$ is not cyclic for $1/2 < t \leq 1$ despite being non-vanishing on $\Omega_{max} = \bD^d$.

\subsection{Polynomial ideals with finite codimension}\label{subsec:poly.ideal.fin.codim}

Let us view $\cP_d$ as the ring of polynomials in $d$-variables over $\bC$. For a family $\cF \subset \cP_d$, let $V(\cF)$ denote the \emph{affine algebraic variety associated to $\cF$}, i.e.,
\begin{equation*}
V(\cF) := \left\{ w \in \bC^d : P(w) = 0 \foral P \in \cF \right\}.
\end{equation*}
We also write $I(\cF)$ for the \emph{ideal generated by $\cF$}, and note that $V(I(\cF)) = V(\cF)$ for any $\cF \subset \cP_d$. Hilbert's Nullstellensatz tells us that ideals of the form
\begin{equation}\label{eqn:max.ideal.of.P_d}
    I(\cF_w) = \langle z_j - w_j : 1 \leq j \leq d \rangle,
\end{equation}
where $w \in \bC^d$ and $\cF_w$ is as in \eqref{eqn:F_w}, are all the maximal ideals of $\cP_d$.

Ahern and Clark \cite{AC70} showcased the connection between finite codimensional ideals (as $\bC$-vector spaces) and finite codimensional invariant subspaces of $H^2(\bD^d)$. Later works (cf. \cite{DP89, Guo99}) generalized their result to \emph{analytic Hilbert modules}, i.e., Hilbert spaces $\cH$ satisfying \ref{item:P1}-\ref{item:P3} over an open set $\Omega \subset \bC^d$ such that $\Omega_{max} = \Omega$ (also see \cite[Chapter 2]{CG03}). In this paper, we focus solely on determining joint cyclicity and not finite codimensionality of invariant subspaces generated by polynomials. For this, we require two important tools from commutative algebra, first of which is a classical result (see \cite[Proposition 3.7.1]{KR00}).

\begin{proposition}\label{prop:identify.ideal.fin.codim}
    An ideal $\fI$ has finite codimension if and only if $V(\fI)$ is finite.
\end{proposition}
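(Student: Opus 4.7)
The plan is to prove each implication separately: the forward one by elementary linear algebra and the reverse one by combining Hilbert's Nullstellensatz with the definition of the radical.

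For the forward direction, I will assume $\dim_\bC(\cP_d/\fI) < \infty$ and produce, for each $1 \leq j \leq d$, a nonzero univariate polynomial $p_j(z_j) \in \fI$. This is immediate: the powers $1, z_j, z_j^2, \dots$ must be linearly dependent modulo $\fI$, so a nontrivial $\bC$-linear combination of them lies in $\fI$. Any $w \in V(\fI)$ then has $w_j$ among the finitely many roots of $p_j$ for every $j$, forcing $V(\fI)$ to be contained in a product of finite sets, hence finite.

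For the reverse direction, suppose $V(\fI) = \{w^{(1)}, \dots, w^{(N)}\}$. The plan is to exhibit finitely many monomials whose classes span $\cP_d/\fI$. For each $j$, let $a_{j,1}, \dots, a_{j,L_j}$ be the distinct $j$-th coordinates appearing among the points of $V(\fI)$, and set
$$p_j(z_j) := \prod_{l=1}^{L_j} (z_j - a_{j,l}).$$
Then $p_j$ vanishes on all of $V(\fI)$, so $p_j \in I(V(\fI))$. By Hilbert's strong Nullstellensatz, $I(V(\fI)) = \sqrt{\fI}$, and therefore $p_j^{m_j} \in \fI$ for some $m_j \geq 1$. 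Consequently, modulo $\fI$, each $z_j$ satisfies a monic polynomial equation of degree $L_j m_j$, and a straightforward induction shows that $\cP_d/\fI$ is spanned by the finitely many monomials $z_1^{\alpha_1} \cdots z_d^{\alpha_d}$ with $0 \leq \alpha_j < L_j m_j$.

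The only genuinely nontrivial step is the appeal to the strong Nullstellensatz to pass from $p_j \in I(V(\fI))$ to $p_j^{m_j} \in \fI$; without it, one would know only that $p_j$ vanishes on $V(\fI)$, which is not enough. Everything else reduces to factorization of univariate polynomials over $\bC$ and an elementary dimension count, so no Noetherian or Chinese Remainder input is required once the radical-versus-vanishing identification is in hand.
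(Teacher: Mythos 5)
Your proof is correct. Note that the paper does not prove this proposition at all --- it states it as a classical fact and cites \cite[Proposition 3.7.1]{KR00} --- so there is no in-paper argument to compare against; what you have written is essentially the standard proof of the Finiteness Theorem from that reference: in the forward direction, finite codimension forces a nontrivial $\bC$-linear dependence among $1, z_j, z_j^2, \dots$ modulo $\fI$, yielding a nonzero univariate polynomial in $\fI$ for each $j$ and hence confining $V(\fI)$ to a finite product of root sets; in the reverse direction, the strong Nullstellensatz upgrades the vanishing of $p_j$ on $V(\fI)$ to $p_j^{m_j} \in \fI$, giving a monic relation for each $z_j$ modulo $\fI$ and hence a finite spanning set of monomials for the quotient. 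Both halves are sound, and even the degenerate case $V(\fI) = \emptyset$ is handled automatically (the empty products give $p_j = 1 \in \sqrt{\fI}$, so $\fI = \cP_d$ and the codimension is zero). You are also right that the Nullstellensatz is the only nonelementary input; no Noetherian or Chinese Remainder machinery is needed.
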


The reader is referred to \cite[Section 3.7]{KR00} for more details on how to algorithmically determine whether a family of polynomials has finitely many common zeroes. Let us now see how we can algebraically reduce the problem of joint cyclicity. Since $\cP_d$ is a unique factorization domain, we can define the \emph{greatest common divisor} $\fg(\cF)$ of any family $\cF \subset \cP_d$. This allows us to write
\begin{equation}\label{eqn:fac.form.ideal}
    I(\cF) = \fg(\cF) \fI_\cF \foral \cF \subset \cP_d.
\end{equation}

Note that $\fg(\fI_\cF) = 1$ is always true. For $d = 1$, we have $\fI_\cF = \cP_1$, since $\cP_1$ is a principal ideal domain, and it follows that $\fg(\cF)$ determines $I(\cF)$ completely. For $d = 2$, we have the following lemma due to Yang (see \cite[Lemma 6.1]{Yan99}).

\begin{lemma}\label{lemma:fin.codim.ideal.in.P_2}
    For each $\cF \subset \cP_2$, the ideal $\fI_\cF$ as in \eqref{eqn:fac.form.ideal} has finite codimension.
\end{lemma}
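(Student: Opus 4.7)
The plan is to reduce the statement to finiteness of the common zero locus and then exploit the special features of $\cP_2$. By Proposition \ref{prop:identify.ideal.fin.codim}, it suffices to show that $V(\fI_\cF)$ is a finite subset of $\bC^2$. Since $\cP_2$ is Noetherian, the ideal $I(\cF)$ is finitely generated by elements of $\cF$ itself, so I may assume $\cF = \{P_1, \ldots, P_k\}$ is finite. A short check shows that $\fg(\cF) = \gcd(P_1, \ldots, P_k)$ (any common divisor of $P_1, \ldots, P_k$ divides every element of $I(\cF) = I(\cF)$, and hence every element of the original $\cF$). Writing $P_i = \fg(\cF) Q_i$ yields $\fI_\cF = (Q_1, \ldots, Q_k)$ with $\gcd(Q_1, \ldots, Q_k) = 1$, so the goal becomes showing $V(Q_1, \ldots, Q_k)$ is finite.

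The key technical step is a two-variable Bezout-type lemma: if $Q, R \in \cP_2 = \bC[z_1, z_2]$ are coprime, then $V(Q, R)$ is finite. I would view $Q, R$ as elements of $\bC[z_1][z_2]$, and apply Gauss's lemma to transfer coprimality to the PID $\bC(z_1)[z_2]$. A Bezout identity $AQ + BR = 1$ in $\bC(z_1)[z_2]$, after clearing the $z_1$-denominators, becomes $\tilde A Q + \tilde B R = r(z_1)$ with $r \in \bC[z_1] \setminus \{0\}$ and $\tilde A, \tilde B \in \cP_2$. Hence the $z_1$-coordinates of $V(Q, R)$ are confined to the finite root set of $r$; swapping the roles of the two variables gives the same for the $z_2$-coordinates, so $V(Q, R)$ is finite.

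To finish, I factor $Q_1$ into distinct irreducibles as $Q_1 = c \prod_{i=1}^m p_i^{a_i}$ in the UFD $\cP_2$. Since $\gcd(Q_1, \ldots, Q_k) = 1$, for each $i$ there is some index $j(i) \in \{2, \ldots, k\}$ with $p_i \nmid Q_{j(i)}$, and by irreducibility $\gcd(p_i, Q_{j(i)}) = 1$. Combining $V(\fI_\cF) \subseteq V(Q_1) = \bigcup_{i=1}^m V(p_i)$ with $V(\fI_\cF) \subseteq V(Q_{j(i)})$ yields
\[
    V(\fI_\cF) = \bigcup_{i=1}^m \bigl( V(\fI_\cF) \cap V(p_i) \bigr) \subseteq \bigcup_{i=1}^m V(p_i, Q_{j(i)}),
\]
which is a finite union of finite sets by the previous step, hence finite.

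The main obstacle is the two-variable Bezout lemma; everything else is routine commutative algebra. This step genuinely forces the restriction to $d=2$: for $d \geq 3$ the ring $\bC(z_1)[z_2, \ldots, z_d]$ is no longer a PID, so one cannot extract a nonzero univariate polynomial from the Bezout identity, and the statement in fact fails — for example $(z_1, z_2) \subset \cP_3$ has $\gcd = 1$ but $\cP_3 / (z_1, z_2) \cong \bC[z_3]$ is infinite-dimensional.
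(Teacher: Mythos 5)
Your proof is correct, but note that the paper does not actually prove this lemma: it is imported verbatim from Yang \cite[Lemma 6.1]{Yan99}, so you have supplied an argument where the authors supply only a citation. Your route is the natural one and matches the spirit of how the paper uses the lemma elsewhere: reduce to showing $V(\fI_\cF)$ is finite via Proposition \ref{prop:identify.ideal.fin.codim}, pass to a finite subfamily $\{P_1,\dots,P_k\}$ with the same ideal and the same greatest common divisor, write $P_i = \fg(\cF) Q_i$ so that $\fI_\cF = (Q_1,\dots,Q_k)$ with $\gcd(Q_i) = 1$, and then invoke the classical two-variable fact that coprime $Q,R \in \bC[z_1,z_2]$ have finitely many common zeros. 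Your proof of that last fact (Gauss's lemma to move coprimality into the PID $\bC(z_1)[z_2]$, a Bezout identity, clearing denominators to trap the $z_1$-coordinates in the zero set of a nonzero $r(z_1)$, then swapping variables) is the standard resultant argument in disguise and is sound, as is the final patching step $V(\fI_\cF) \subseteq \bigcup_i V(p_i, Q_{j(i)})$ using the irreducible factors $p_i$ of $Q_1$. Two cosmetic points: the parenthetical ``$I(\cF) = I(\cF)$'' is evidently a typo for $I(\{P_1,\dots,P_k\}) = I(\cF)$, and the degenerate case $\cF \subseteq \{0\}$ (where $\fg(\cF) = 0$ and $\fI_\cF$ is not well defined) is silently excluded, but the paper excludes it too. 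Your closing remark on why the argument is genuinely two-dimensional, with the counterexample $\{z_1,z_2\} \subset \cP_3$, agrees exactly with the paper's own comment following the lemma.
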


Clearly, this lemma does not hold for $d > 2$; simply take $\cF = \{ z_1, z_2 \} \subset \cP_3$ and note that $\fg(\cF) = 1$ but $I(\cF)$ has infinite codimension.

\subsection{Joint cyclicity of polynomials}\label{subsec:joint.cyc.poly}

Let us start with an easy generalization of Proposition \ref{prop:alg.prop.of.cyc.poly}. Given two families $\cQ \subset \cP_d$ and $\cF \subset \cX$, we define
    \begin{equation*}
        \cQ \cF := \{ P F : P \in \cQ \AND F \in \cF \} \subset \cX.
    \end{equation*}

\begin{proposition}\label{prop:alg.prop.joint.cyc.poly}
    If $\cQ \subset \cP_d$ and $\cF \subset \cX$ are two families, then $\cQ \cF$ is jointly cyclic if and only if $\cQ$, $\cF$ are both jointly cyclic. In particular, $\cQ$ is jointly cyclic if and only if $\fg(\cQ)$ is cyclic and $\fI_\cQ$ is jointly cyclic.
\end{proposition}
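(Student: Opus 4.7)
The plan is to extend the single-polynomial argument of Proposition~\ref{prop:alg.prop.of.cyc.poly} to families, exploiting the fact (guaranteed by \ref{item:P3}) that multiplication by any $P \in \cP_d$ is a continuous self-map of $\cX$. The one identity I would invoke throughout is $\overline{P \cdot Y} = \overline{P \cdot \overline{Y}}$ for every $Y \subseteq \cX$, which in particular yields the useful containment $P \cdot \cX = P \cdot \overline{\cP_d} \subseteq S[P]$ via density of $\cP_d$.

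For the forward direction of the first equivalence, I would prove the double inclusion $S[\cQ\cF] \subseteq S[\cQ] \cap S[\cF]$. Both containments are nearly immediate: fixing $P \in \cQ$ and $F \in \cF$, the membership $PF \in F\cP_d \subseteq S[F] \subseteq S[\cF]$ is obvious, while $PF \in P\cX \subseteq S[P] \subseteq S[\cQ]$ follows from the continuity identity above. For the reverse direction, assume both families are jointly cyclic and fix $P \in \cQ$; the identity gives $\overline{\spn}_{F \in \cF} S[PF] = \overline{P \cdot S[\cF]}$, and joint cyclicity of $\cF$ collapses the right-hand side to $\overline{P \cdot \cX} = S[P]$. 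Thus $S[\cQ\cF] \supseteq S[P]$ for every $P \in \cQ$, and taking closed spans gives $S[\cQ\cF] \supseteq S[\cQ] = \cX$.

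For the ``in particular'' clause, I plan to reduce from families of polynomials to their generated ideals. Since $\spn(\cQ \cP_d) = I(\cQ)$, and ideals are automatically $\bC$-linear subspaces closed under polynomial multiplication, one has $S[\cQ] = \overline{I(\cQ)} = S[I(\cQ)]$, so joint cyclicity of $\cQ$ is equivalent to joint cyclicity of $I(\cQ)$. The factorization \eqref{eqn:fac.form.ideal} then writes $I(\cQ)$ as the product family $\{\fg(\cQ)\} \cdot \fI_\cQ$, and applying the first part of the proposition with the singleton $\{\fg(\cQ)\}$ in the role of $\cQ$ and $\fI_\cQ$ in the role of $\cF$ delivers the claim.

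I do not anticipate a serious obstacle here; the argument is essentially bookkeeping wrapped around the one continuity identity for polynomial multipliers. The point that most deserves a brief sanity check is the equivalence $S[\cQ] = S[I(\cQ)]$, since this is the reduction that legitimizes replacing a general family of polynomials by a product of two families in the final step.
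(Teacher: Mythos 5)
Your proposal is correct and follows essentially the same route as the paper: the forward direction via the double inclusion $S[\cQ\cF] \subseteq S[\cQ] \cap S[\cF]$, and the converse via the chain $S[\cQ\cF] = \overline{\spn}_{P \in \cQ}\overline{P\, S[\cF]} = \overline{\spn}_{P \in \cQ}\overline{P\cX} = S[\cQ]$ using the continuity identity \eqref{eqn:cont.of.mult.by.poly}. The only difference is that you spell out the reduction $S[\cQ] = S[I(\cQ)]$ needed for the final clause, which the paper leaves implicit; that step is valid and worth the sanity check you gave it.
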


\begin{proof}
    If $P \in \cQ$ and $F \in \cF$, then, as in the proof of Proposition \ref{prop:alg.prop.of.cyc.poly}, we get
    \begin{equation*}
        PF \in S[P] \cap S[F] \subset S[\cQ] \cap S[\cF].
    \end{equation*}
    It follows that $S[\cQ \cF] \subseteq S[\cQ] \cap S[\cF]$, and thus $\cQ, \cF$ are both jointly cyclic.

    Conversely, suppose $\cQ,\cF$ are both jointly cyclic. Using \eqref{eqn:cont.of.mult.by.poly}, we note that
    \begin{equation*}
        S[\cQ \cF] = \overline{\spn}_{\substack{P \in \cQ \\ F \in \cF}} \overline{PF \cP_d} =  \overline{\spn}_{P \in \cQ}\overline{ P S[\cF]} = \overline{\spn}_{P \in \cQ} \overline{P \cX} = S[\cQ] = \cX.
    \end{equation*}
    Thus, $\cQ \cF$ is jointly cyclic. The final assertion now follows from the first part.
\end{proof}

As noted after \eqref{eqn:fac.form.ideal}, we know that $\fI_\cF = \cP_1$ for $d = 1$ and thus, we can combine Corollary \ref{cor:cyc.poly.in.1-var} and Proposition \ref{prop:alg.prop.joint.cyc.poly} to easily obtain the following result.

\begin{corollary}\label{cor:joint.cyc.poly.1-var}
    If $\cX$ is a Hausdorff TVS of holomorphic functions satisfying \ref{item:P1}-\ref{item:P3} on an open set $\Omega \subset \bC$, then a family of polynomials $\cF \subset \cP_1$ is jointly cyclic if and only if $V(\cF) \cap \Omega_{max} = \emptyset$.
\end{corollary}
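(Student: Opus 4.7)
The plan is to combine Corollary \ref{cor:cyc.poly.in.1-var} (which characterizes cyclicity of a single polynomial in one variable) with Proposition \ref{prop:alg.prop.joint.cyc.poly} (which reduces joint cyclicity of a family to the cyclicity of its greatest common divisor and the joint cyclicity of a complementary ideal), exploiting the fact that $\cP_1$ is a principal ideal domain.

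First, I would observe that since $\cP_1$ is a PID, the ideal $I(\cF)$ equals $(\fg(\cF))$. Consequently, in the factorization $I(\cF) = \fg(\cF) \fI_\cF$ of \eqref{eqn:fac.form.ideal}, we must have $\fI_\cF = \cP_1$. This makes $\fI_\cF$ automatically jointly cyclic in $\cX$: since $1 \in \fI_\cF$, we have $S[\fI_\cF] \supseteq S[1] = \overline{\cP_1} = \cX$ using \ref{item:P1}. Applying Proposition \ref{prop:alg.prop.joint.cyc.poly}, it follows that $\cF$ is jointly cyclic in $\cX$ if and only if $\fg(\cF)$ is cyclic.

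Next, I would invoke Corollary \ref{cor:cyc.poly.in.1-var} to conclude that $\fg(\cF)$ is cyclic if and only if $\fg(\cF)(w) \neq 0$ for every $w \in \Omega_{max}$, i.e., $V(\fg(\cF)) \cap \Omega_{max} = \emptyset$. To tie this back to the statement of the corollary, I would verify that $V(\fg(\cF)) = V(\cF)$: the inclusion $V(\fg(\cF)) \subseteq V(\cF)$ follows because $\fg(\cF)$ divides every $P \in \cF$, so each $P$ vanishes wherever $\fg(\cF)$ does; the reverse inclusion is a special case of \eqref{eqn:V(I(F))=V(F)}, since $V(\cF) = V(I(\cF)) = V((\fg(\cF))) = V(\fg(\cF))$.

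There is no real obstacle here — the result is essentially a two-line composition of the two tools cited above, with the single verification that $V(\cF) = V(\fg(\cF))$, which is an elementary algebraic identity in the PID $\cP_1$. The value of the statement lies not in its proof but in the fact that it packages the one-variable classification in a form that makes the contrast with $d \geq 2$ transparent: in that regime, Lemma \ref{lemma:fin.codim.ideal.in.P_2} shows that $\fI_\cF$ need not equal $\cP_d$, so an additional joint cyclicity condition on a finite-codimensional ideal enters.
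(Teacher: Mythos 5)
Your proof is correct and follows exactly the paper's route: since $\cP_1$ is a PID, $\fI_\cF = \cP_1$ is automatically jointly cyclic, so Proposition \ref{prop:alg.prop.joint.cyc.poly} reduces the claim to the cyclicity of $\fg(\cF)$, which Corollary \ref{cor:cyc.poly.in.1-var} characterizes by non-vanishing on $\Omega_{max}$, and $V(\cF) = V(\fg(\cF))$. One small slip in your closing commentary: Lemma \ref{lemma:fin.codim.ideal.in.P_2} does not show that $\fI_\cF$ need not equal $\cP_d$ for $d \geq 2$ (that is seen from examples like $\{z_1, z_2\}$); rather, it shows that for $d = 2$ the ideal $\fI_\cF$ always has finite codimension, which is what makes Theorem \ref{thm:joint.cyc.poly} applicable there.
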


Let us now state the main result of this section. Note that the theorem below is a direct generalization of the equivalence $(1) \Leftrightarrow (2)$ in Theorem \ref{thm:description.max.dom}, since the families $\cF_w$ are such that $\fg(\cF_w) = 1$ and $\fI_{\cF_w} = I(\cF_w)$ has codimension $1$.

\begin{theorem}\label{thm:joint.cyc.poly}
    Let $\cF \subset \cP_d$ be a family of polynomials such that $V(\fI_\cF)$ is finite. Then, $\cF$ is jointly cyclic if and only if $V(\cF) \cap \Omega_{max} = \emptyset$ and $\fg(\cF)$ is cyclic.
\end{theorem}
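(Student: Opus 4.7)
The forward direction is straightforward. By Proposition \ref{prop:alg.prop.joint.cyc.poly}, joint cyclicity of $\cF$ forces $\fg(\cF)$ to be cyclic, and if there were some $w \in V(\cF) \cap \Omega_{max}$, then (using Proposition \ref{prop.point-evals.are.multiplicative}) the continuous functional $\Lambda_w$ would vanish on $I(\cF)$, and hence by continuity on its closure $S[\cF] = \cX$, contradicting $\Lambda_w(1) = 1$. Since $V(\fI_\cF) \subseteq V(\cF)$, the converse reduces via Proposition \ref{prop:alg.prop.joint.cyc.poly} to proving that $\fI_\cF$ itself is jointly cyclic, assuming $V(\fI_\cF) = \{w_1, \dots, w_n\}$ is finite and disjoint from $\Omega_{max}$.

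My plan is to build a partition of unity modulo $\fI_\cF$ and combine it with Theorem \ref{thm:description.max.dom}. By Proposition \ref{prop:identify.ideal.fin.codim}, $\cP_d / \fI_\cF$ is finite-dimensional, hence Artinian, and its maximal ideals are the $I(\cF_{w_i}) / \fI_\cF$ by Hilbert's Nullstellensatz. The structure theorem for Artinian rings then yields $\cP_d / \fI_\cF \cong \prod_i \cP_d / Q_i$, where $\fI_\cF = \bigcap_i Q_i$ is the primary decomposition with $\sqrt{Q_i} = I(\cF_{w_i})$. Lifting the orthogonal idempotents to polynomials $E_1, \dots, E_n \in \cP_d$ produces elements satisfying $1 - \sum_i E_i \in \fI_\cF$, $E_i - 1 \in Q_i$, and $E_i \in Q_j$ for $j \neq i$. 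Noetherianity of $\cP_d$ provides an integer $N$ with $I(\cF_{w_i})^N \subseteq Q_i$ for every $i$, and combining these containments yields the key inclusion $E_i \cdot I(\cF_{w_i})^N \subseteq \bigcap_j Q_j = \fI_\cF$.

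The topological input then enters. By Theorem \ref{thm:description.max.dom}, $w_i \notin \Omega_{max}$ means $\cF_{w_i}$ is jointly cyclic, so $S[I(\cF_{w_i})] = \cX$. A straightforward induction via Proposition \ref{prop:alg.prop.joint.cyc.poly} shows the power $I(\cF_{w_i})^N$ is also jointly cyclic, i.e., $\overline{I(\cF_{w_i})^N} = \cX$. Using continuity of multiplication by $E_i$, we then deduce
\begin{equation*}
E_i = E_i \cdot 1 \in E_i \cdot \overline{I(\cF_{w_i})^N} \subseteq \overline{E_i \cdot I(\cF_{w_i})^N} \subseteq \overline{\fI_\cF} = S[\fI_\cF].
\end{equation*}
Combining, $1 = \left(1 - \sum_i E_i\right) + \sum_i E_i \in \fI_\cF + S[\fI_\cF] = S[\fI_\cF]$, completing the proof. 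The main obstacle I anticipate is the commutative-algebra bookkeeping around the lifted idempotents: simultaneously ensuring $E_i - 1 \in Q_i$, $E_i \in Q_j$ for $j \neq i$, and $I(\cF_{w_i})^N \subseteq Q_i$ requires careful application of the Artinian decomposition and Noetherianity, after which the passage from the algebraic inclusion $E_i \cdot I(\cF_{w_i})^N \subseteq \fI_\cF$ to the density statement $E_i \in S[\fI_\cF]$ follows almost mechanically from Theorem \ref{thm:description.max.dom} and iteration of Proposition \ref{prop:alg.prop.joint.cyc.poly}.
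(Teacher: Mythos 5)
Your argument is correct, but it follows a genuinely different route from the paper's. The paper also reduces, via Propositions \ref{prop:identify.ideal.fin.codim} and \ref{prop:alg.prop.joint.cyc.poly}, to showing that a finite-codimensional $\fI_\cF$ with $V(\fI_\cF) \cap \Omega_{max} = \emptyset$ is jointly cyclic, but from there it argues globally and linear-algebraically: it picks a finite-dimensional complement $\cV$ of $\fI_\cF$ in $\cP_d$, lets $\cW \subseteq \cV$ be the part already absorbed into $S[\fI_\cF]$, and shows that any point of $V(\fI_\cF + \cW)$ would yield a functional agreeing with $\Lambda_w$ on $\cP_d$ whose kernel is closed, hence continuous by \cite[Theorem 1.18]{Rud73}, forcing $w \in \Omega_{max}$ --- a contradiction; the Nullstellensatz then gives $I(\fI_\cF + \cW) = \cP_d$ and hence $S[\fI_\cF] = \cX$. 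You instead localize at the points of $V(\fI_\cF)$: primary decomposition and the Chinese Remainder Theorem for the pairwise comaximal components $Q_i$ give lifted idempotents $E_i$, Noetherianity gives $I(\cF_{w_i})^N \subseteq Q_i$, and the inclusion $E_i \cdot I(\cF_{w_i})^N \subseteq \bigcap_j Q_j = \fI_\cF$ checks out ($Q_i$ absorbs the power, and $E_i \in Q_j$ handles $j \neq i$). The topological input is then only Theorem \ref{thm:description.max.dom} (each $\cF_{w_i}$ is jointly cyclic) together with the iterated use of Proposition \ref{prop:alg.prop.joint.cyc.poly} to get joint cyclicity of $I(\cF_{w_i})^N$, after which $E_i \in \overline{E_i\, I(\cF_{w_i})^N} \subseteq S[\fI_\cF]$ and $1 = (1 - \sum_i E_i) + \sum_i E_i \in S[\fI_\cF]$. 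Your version trades the paper's closed-kernel continuity argument for heavier commutative algebra, and it makes transparent that joint cyclicity of a finite-codimensional ideal is a local condition at the points of its variety; the paper's version is lighter on algebra and reuses the same functional-analytic device that already powers Theorem \ref{thm:description.max.dom}. Both hinge on the Nullstellensatz and on $V(\fI_\cF)$ being finite.
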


\begin{proof}
    Using Propositions \ref{prop:identify.ideal.fin.codim} and \ref{prop:alg.prop.joint.cyc.poly}, it suffices to show that if $\fI_\cF$ has finite codimension (say $k$), then $\fI_\cF$ is jointly cyclic if and only if $V(\fI_\cF) \cap \Omega_{max} = \emptyset$.

    To this end, suppose $V(\fI_\cF) \cap \Omega_{max} = \emptyset$ and let $\cV = \spn \{Q_1, \dots, Q_k\} \subset \cP_d$ be such that
    \begin{equation*}
        \fI_\cF + \cV = \cP_d
    \end{equation*}
    as a $\bC$ vector space. Without loss of generality, we can assume that there exists $1 \leq l \leq k$ such that $\cW = \spn\{ Q_1,\dots,Q_l \} \subset S[\fI_\cF]$ and
    \begin{equation*}
        S[\fI_\cF] \oplus (\cV \ominus \cW) = \cX
    \end{equation*}
    as an algebraic direct sum. We want to show that $l = k$, i.e., $\cV \ominus \cW = \{0\}$.
    
    First, we claim that $V(\fI_\cF + \cW) = \emptyset$. Indeed, if $w \in V(\fI_\cF + \cW)$, then for each $F = G + P \in S[\fI_\cF] \oplus (\cV \ominus \cW) = \cX$, we can define
    \begin{equation*}
        \Lambda(F) := \Lambda_w \big\vert_{\cV \ominus \cW}(P).
    \end{equation*}
    Now, since
    \begin{equation*}
        (\fI_\cF + \cW) \oplus (\cV \ominus \cW) = \cP_d,
    \end{equation*}
    it follows that $\Lambda \equiv \Lambda_w $ on $\cP_d$ and
    \begin{equation*}
        \ker \Lambda = S[\fI_\cF] \oplus \ker\Lambda_w \big\vert_{\cV \ominus \cW}.
    \end{equation*}
    This implies that $\ker \Lambda$ is closed, and we can invoke \cite[Theorem 1.18]{Rud73} to show that $\Lambda_w \in \cX^*$. Thus, $w \in \Omega_{max}$ as a result, but this cannot happen since
    \begin{equation*}
        w \in V(\fI_\cF + \cW) \cap \Omega_{max} \subseteq V(\fI_\cF) \cap \Omega_{max} = \emptyset.
    \end{equation*}
    This shows that $V(\fI_\cF + \cW) = \emptyset$ and, by Hilbert's Nullstellensatz, we get that $I(\fI_\cF + \cW) = \cP_d$ since it is a nontrivial ideal that cannot be contained in any maximal ideal (see \eqref{eqn:max.ideal.of.P_d}). It then follows that
    \begin{equation*}
        S[\fI_\cF] = S[\fI_\cF + \cW] = \overline{I(\fI_\cF + \cW)} = \overline{\cP_d} = \cX
    \end{equation*}
    and $\cV \ominus \cW = \{0\}$, as required. Thus, $\fI_\cF$ is jointly cyclic.

    The converse is easy to see, since $w \in V(\fI_\cF) \cap \Omega_{max}$ implies that $\Lambda_w \equiv 0$ on $S[\fI_\cF]$. This further implies that $1 \notin S[\fI_\cF]$ and hence, $\fI_\cF$ is not jointly cyclic.
\end{proof}

The assumption that $V(\fI_\cF)$ is finite cannot be dropped in general from the hypothesis of Theorem \ref{thm:joint.cyc.poly} as the following example shows.

\begin{example}\label{example:infinite.variety}
    This example requires several tools and notations used in \cite{Vav23}. To keep this discussion compact, we provide a sketch of the argument. Consider $\cF = \{ 1 - 2 z_1 z_2, z_3\}$ and $\cX = \cD_{5/2}(\bB_3)$. Recall from Example \ref{example:find.max.dom.2} that $\Omega_{max} = \bB_3$. Now, $\fg(\cF) = 1$, $V(\cF)$ is infinite and $V(\cF) \cap \bB_3 = \emptyset$. However, since
    \begin{equation*}
        V(\cF) \cap \bS_3 = \left\{ \left( \frac{t}{\sqrt{2}},\frac{\overline{t}}{\sqrt{2}},0 \right) : t \in \bT \right\}
    \end{equation*}
    has positive \emph{Riesz $5/2$-capacity}, i.e. $\operatorname{cap}_{5/2}(V(\cF) \cap \bS_3) > 0$, the argument from \cite[Section 5]{Vav23} shows that $S[\cF] \neq \cD_{5/2}(\bB_3)$. Essentially, the positive capacity condition allows you to construct a linear functional on $\cD_{5/2}(\bB_3)$ that separates $S[\cF]$ and $1$. Thus, the conclusion of Theorem \ref{thm:joint.cyc.poly} does not hold for $\cF$.
\end{example}

We can also combine Lemma \ref{lemma:fin.codim.ideal.in.P_2} and Theorem \ref{thm:joint.cyc.poly} to get the following description of jointly cyclic polynomials for $d = 2$.

\begin{corollary}\label{cor:joint.cyc.in.2-var}
    If $\cX$ is a Hausdorff TVS of holomorphic functions satisfying \ref{item:P1}-\ref{item:P3} on an open set $\Omega \subset \bC^2$, then a family of polynomials $\cF \subset \cP_2$ is jointly cyclic if and only if $V(\cF) \cap \Omega_{max} = \emptyset$ and $\fg(\cF)$ is cyclic.

    In particular, $V(\cF)$ completely determines the joint cyclicity of $\cF$ if $\fg(\cF) = 1$.
\end{corollary}

\begin{remark}\label{rem:joint.cyc.Dir.type.sp}
    Combining Corollary \ref{cor:joint.cyc.in.2-var} with the main results from \cite{BKKLSS16, KV23} gives a complete description of jointly cyclic polynomials in $H^p(\bD^2)$, $\cD_t(\bD^2)$ and $\cD_t(\bB_2)$.
\end{remark}

\section{Classification of maximal domains}\label{sec:classify.max.dom}

In Examples \ref{example:find.max.dom.1} and \ref{example:find.max.dom.2}, we saw that it is possible for $\Omega_{max}$ to be equal to either $\Omega$ or $\sigma_R(S)$, but it is natural to ask for what $\Gamma \subset \sigma_R(S) \setminus \Omega$ can we find a space $\cX$ with $\Omega_{max} = \Omega \cup \Gamma$.

\subsection{A topological property of maximal domains}\label{subsec:top.prop.max.dom}

In this subsection, we establish a necessary condition on the maximal domain for TVSs $\cX$ that are metrizable, i.e., the topology of $\cX$ is given by an equivalent translation-invariant metric $\operatorname{d}$. In particular, this includes the class of all $F$-spaces and thus, all Banach spaces. Recall that a set is called $F_\sigma$ if it is a countable union of closed sets, and is called $G_\delta$ if it is a countable intersection of open sets.

We showed in Theorem \ref{thm:max.dom.is.point.spec.of.adjoint} that $\Omega_{max} = \sigma_p(S^*)$. Nikol'skaya \cite{Nik74} observed that $\sigma_p(T)$ is always an $F_\sigma$ set for an operator $T$ on a Banach space $\cX$, and the argument presented can be generalized to joint point spectra as well. We present an independent proof of this fact for $\Omega_{max}$ in the general setting of metrizable TVSs.

\begin{theorem} \label{thm: maximal domain is F-sigma}
    If $\cX$ is a metrizable TVS of holomorphic functions satisfying \ref{item:P1}-\ref{item:P3} on an open set $\Omega \subset \bC^d$, then $\Omega_{max}$ is an $F_{\sigma}$ set.
\end{theorem}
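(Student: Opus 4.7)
The plan is to exploit the metrizability of $\cX$ (a countable base of neighborhoods of $0$) together with the density of $\cP_d$ to exhibit $\Omega_{max}$ as a countable union of closed subsets of $\bC^d$.

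The first step is the following characterization: $w \in \Omega_{max}$ if and only if there exist $n, k \in \bN$ such that $|P(w)| \leq k$ for every $P \in \cP_d$ with $\distance(P, 0) \leq 1/n$. To see this, one first observes that $w \in \Omega_{max}$ is equivalent to $\Lambda_w|_{\cP_d}$ being continuous for the subspace topology on $\cP_d$: the forward direction is trivial, and the backward direction uses density via \ref{item:P1}, completeness of the target $\bC$, and the standard sequential extension argument (which is valid because $\cX$ is metrizable and convergent sequences are Cauchy). Next one invokes the TVS fact that a linear functional is continuous iff it is bounded on some neighborhood of $0$ (the forward direction is the preimage of the unit disk in $\bC$; the converse uses a balanced neighborhood $V$ and the fact that for a linear functional bounded by $M$ on $V$ and any $\epsilon > 0$, the scaled neighborhood $(\epsilon/M) V$ witnesses continuity at $0$). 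Finally, the countable base of neighborhoods $\{B_{1/n}\}_{n\in\bN}$, where $B_r := \{G \in \cX : \distance(G, 0) \leq r\}$, together with the obvious quantization of the boundedness constant, yields the claimed characterization.

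With this in hand, for each $n, k \in \bN$ define
\begin{equation*}
F_{n,k} := \bigcap_{\substack{P \in \cP_d \\ \distance(P, 0) \leq 1/n}} \left\{ w \in \bC^d : |P(w)| \leq k \right\}.
\end{equation*}
Since each polynomial $P$ is continuous on $\bC^d$, the set $\{w : |P(w)| \leq k\}$ is closed, so each $F_{n,k}$ is closed as an intersection of closed sets. The characterization above then gives $\Omega_{max} = \bigcup_{n,k \in \bN} F_{n,k}$, exhibiting $\Omega_{max}$ as an $F_\sigma$ set.

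I do not foresee a serious obstacle. The main care goes into (i) the extension of $\Lambda_w$ from the dense subspace $\cP_d$ to $\cX$ (which requires only completeness of the codomain $\bC$, not completeness of $\cX$), and (ii) correctly handling the boundedness-on-a-neighborhood criterion for continuity in a TVS that need not be locally convex, where one must work with balanced neighborhoods and the continuity of scalar multiplication rather than with seminorm estimates.
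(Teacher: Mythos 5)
Your proof is correct and follows essentially the same route as the paper's: both characterize $\Omega_{max}$ by boundedness of $\Lambda_w$ on the polynomials lying in a countable base of balls around $0$, extend $\Lambda_w$ from $\cP_d$ to $\cX$ by density and completeness of $\bC$, and then exhibit $\Omega_{max}$ as a countable union of intersections of the closed sets $\left\{ w : |P(w)| \le k \right\}$. The only cosmetic differences are your double index $(n,k)$ where the paper normalizes the bound to $1$, and your appeal to the bounded-on-a-balanced-neighborhood criterion where the paper writes out the explicit Cauchy estimate.
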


\begin{proof}
   Let $\operatorname{d}$ be the translation-invariant metric of $\cX$. For each $n \in \bN$, define
   \begin{equation*}
       U_n := \left\{ F \in \cX : \operatorname{d}(F,0) < 1/2^n \right\}.
   \end{equation*}
   We claim that $w \in \Omega_{max}$ if and only if there exists $N \in \bN$ such that
    \begin{equation} \label{eqn: point in max domain}
        |P(w)| \leq 1 \foral P \in \cP_d \cap U_N.
    \end{equation}
    
    Indeed, if $w \in \Omega_{max}$ then \eqref{eqn: point in max domain} clearly holds for some $N \in \bN$ by the continuity of $\Lambda_w$. Conversely, suppose \eqref{eqn: point in max domain} holds for some $w \in \bC^d$ and $N \in \bN$. It suffices to show that $\Lambda_w$ is well-defined on $\cX$ via limits along polynomials $P \in \cP_d$, i.e.,
    \begin{equation}\label{eqn:eval.at.F.via.polys}
        \Lambda_w(F) := \lim_{P \to F} P(w) \foral F \in \cX,
    \end{equation}
    since this combined with \eqref{eqn: point in max domain} implies the continuity of $\Lambda_w$. To this end, let $F \in \cX$ and $\epsilon > 0$ be fixed but arbitrary, and note that if $P_1, P_2 \in F + \epsilon U_{N+1}$, then $(P_1 - P_2)/\epsilon \in U_N$. \eqref{eqn: point in max domain} then shows that $|P_1(w) - P_2(w)| \leq \epsilon$, which in turn shows that $\Lambda_w(F)$ is well-defined as in \eqref{eqn:eval.at.F.via.polys}. This concludes the claimed equivalence of $w \in \Omega_{max}$ with \eqref{eqn: point in max domain}.
    
    Our result now follows from this equivalence, since
    \begin{equation*}
        \Omega_{max} = \bigcup_{n \in \bN} \left\{ w \in \bC^d: \: |P(w)| \le 1, \ P \in \cP_d \cap U_n \right\}
    \end{equation*}
    is clearly an $F_\sigma$ set.
\end{proof}

\subsection{Maximal domains on the unit disk}\label{subsec:eg.max.dom.unit.disk}

It is now a natural question to ask whether every $F_\sigma$ subset of $\overline{\bD}$ (for instance) appears as a maximal domain of some holomorphic function space on $\bD$. We give a partial answer to this question in this subsection. First, we introduce some important notation.

For two quantities $A,B > 0$ depending on some parameters, we write $A \lesssim B$ (equivalently $B \gtrsim A$) whenever there exists a constant $C > 0$ that is independent of the parameters and such that $A \le CB$. We write $A \asymp B$ if both $A \lesssim B$ and $A \gtrsim B$ hold. Lastly, for a holomorphic function $F$ and any $n \in \bN \cup \{0\}$, we write $F^{(n)}$ to denote the $n^\text{th}$ derivative of $F$. In particular, $F^{(0)} = F$.

Let $dA = \frac{1}{\pi} dxdy$ be the normalized area measure on $\bD$ and let $v \in L^1(\bD)$ be a real-valued function satisfying:
\begin{enumerate}
    \item For every $r < 1$, there exists a $\delta_r > 0$ such that
    \begin{equation}\label{eqn:v.prop.1}
        v(z) > \delta_r \text{ for a.e. } z \in r \bD.
    \end{equation}
    \item For every $r < 1$ we have
    \begin{equation}\label{eqn:v.prop.2}
        v(z) \leq v(rz) \text{ for a.e. } z \in \bD.
    \end{equation}
\end{enumerate}
Lastly, we introduce the \emph{weighted Bergman spaces}
\begin{equation*}
    L^2_a(\bD,v) := \left \{ F \in \operatorname{Hol}(\bD) : \|F\|^2_v := \int_\bD |F|^2 v dA < \infty \right \}.
\end{equation*}

In order to obtain bounded point-evaluations at certain points on $\bT$, we need to control the behavior of the derivatives of functions appropriately. Thus, we introduce the following modification of $L^2_a(\bD,v)$.

\begin{definition}
    For any $n \in \bZ_+$ and $v \in L^1(\bD)$ satisfying \eqref{eqn:v.prop.1} and \eqref{eqn:v.prop.2}, define
    \begin{equation*}
         \cH_{v, n} = \left\{F \in \operatorname{Hol}(\bD): \: \|F\|^2_{v, n} := \sum_{k = 0}^n \| F^{(k)} \|^2_v < \infty \right\}.
    \end{equation*}
\end{definition}

\begin{theorem} \label{thm: v n spaces}
    $\cH_{v, n}$ is a Hilbert space satisfying \ref{item:P1}-\ref{item:P3} on $\bD$ with $\Omega_{max} \subseteq \overline{\bD}$.
\end{theorem}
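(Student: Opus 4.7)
The verification of the four claims proceeds step by step. For the Hilbert space structure, I take the natural inner product
\[
\langle F, G \rangle_{v,n} := \sum_{k=0}^n \int_\bD F^{(k)} \overline{G^{(k)}} \, v \, dA.
\]
To prove completeness, I first observe that $L^2_a(\bD, v)$ is itself a Hilbert space: property \eqref{eqn:v.prop.1} combined with the sub-mean-value inequality for holomorphic functions shows that for each $w \in \bD$ there is a constant $C_w$ with $|F(w)|^2 \leq C_w \|F\|_v^2$ for all $F \in L^2_a(\bD, v)$, and therefore $L^2_a(\bD, v)$ is closed in $L^2(\bD, v \, dA)$. A Cauchy sequence $(F_m)$ in $\cH_{v,n}$ then yields Cauchy sequences $(F_m^{(k)})$ in $L^2_a(\bD, v)$ for each $0 \leq k \leq n$; since $L^2_a(\bD, v)$-convergence is uniform on compact subsets of $\bD$, differentiation commutes with the limit, and the limit belongs to $\cH_{v,n}$. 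The same point-evaluation estimate also yields \ref{item:P2}.

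For \ref{item:P3}, Leibniz's rule gives $(zF)^{(k)} = zF^{(k)} + k F^{(k-1)}$ (with the convention $F^{(-1)} := 0$). Since $|z| \leq 1$ on $\bD$, we obtain $\|(zF)^{(k)}\|_v \leq \|F^{(k)}\|_v + k \|F^{(k-1)}\|_v$, and summing in $k$ shows $\|SF\|_{v,n} \leq (n+1)\|F\|_{v,n}$.

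For \ref{item:P1}, I plan to use dilations: given $F \in \cH_{v,n}$ and $0 < r < 1$, let $F_r(z) := F(rz)$. Each $F_r$ extends holomorphically to a neighborhood of $\overline{\bD}$, so its Taylor polynomial partial sums (together with their derivatives up to order $n$) converge uniformly on $\overline{\bD}$, and therefore converge to $F_r$ in $\cH_{v,n}$ (using $v \in L^1(\bD)$). It remains to show $F_r \to F$ in $\cH_{v,n}$ as $r \to 1^-$, which is the main subtle point. Property \eqref{eqn:v.prop.2} enters here: the substitution $w = rz$ together with $v(z) \leq v(rz)$ gives
\[
\|F_r^{(k)}\|_v^2 \;=\; r^{2k} \int_\bD |F^{(k)}(rz)|^2 v(z) \, dA(z) \;\leq\; r^{2k-2} \|F^{(k)}\|_v^2.
\]
Summing in $k$ and letting $r \to 1^-$ yields $\limsup_{r \to 1^-} \|F_r\|_{v,n}^2 \leq \|F\|_{v,n}^2$, while Fatou's lemma applied to the pointwise convergence $|F_r^{(k)}|^2 v \to |F^{(k)}|^2 v$ gives the matching lower bound, hence $\|F_r\|_{v,n} \to \|F\|_{v,n}$. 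The pointwise convergence $F_r \to F$ on $\bD$ together with the uniform norm bound also yields weak convergence $F_r \to F$ in $\cH_{v,n}$ (via a standard three-epsilon approximation using the density of the span of the reproducing kernels $\{K_w : w \in \bD\}$), and in a Hilbert space weak convergence plus convergence of norms gives strong convergence.

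Finally, for $\Omega_{max} \subseteq \overline{\bD}$, I invoke Theorem \ref{thm:max.dom.in.right.Harte.spec}: it suffices to show $\sigma_R(S) \subseteq \overline{\bD}$. For $|w| > 1$, the function $A(z) := 1/(z-w)$ is holomorphic on a neighborhood of $\overline{\bD}$, so $A$ and all its derivatives are bounded on $\bD$. The Leibniz argument used for \ref{item:P3} then shows that multiplication by $A$ defines a bounded operator $M_A \in \cL(\cH_{v,n})$, and $(S - wI) M_A = I$ by construction, so $w \in \rho_R(S)$. Thus $\sigma_R(S) \subseteq \overline{\bD}$, and the theorem follows. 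The one real obstacle is establishing $F_r \to F$ for \ref{item:P1}; once the correct use of \eqref{eqn:v.prop.2} is identified, the remaining verifications are essentially routine.
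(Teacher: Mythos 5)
Your argument is correct. For the Hilbert-space structure, \ref{item:P2}, \ref{item:P3}, and the density of polynomials via dilations plus Taylor partial sums, you follow essentially the same path as the paper; the only difference there is that you reprove the convergence $F_r \to F$ from scratch (the norm inequality $\|F_r^{(k)}\|_v^2 \le r^{2k-2}\|F^{(k)}\|_v^2$ from \eqref{eqn:v.prop.2}, Fatou for the matching lower bound, and weak convergence plus norm convergence), whereas the paper outsources this step to the standard weighted-Bergman-space argument in the literature. Your computation with the substitution $u = rz$ and $v(u/r) \le v(u)$ is exactly the right use of \eqref{eqn:v.prop.2}, so this is a complete, self-contained version of the step the paper cites.

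Where you genuinely diverge is the final inclusion $\Omega_{max} \subseteq \overline{\bD}$. The paper argues function-theoretically: for $|w|>1$ it puts $F(z) = 1/(z-w)$ in $\cH_{v,n}$ and shows that if $\Lambda_w$ were continuous then $\Lambda_w(F) = \lim_{r\to 1} F(rw)$, which diverges. You instead show $\sigma_R(S) \subseteq \overline{\bD}$ by exhibiting the bounded multiplier $M_{1/(z-w)} \in \cL(\cH_{v,n})$ with $(S - wI)M_{1/(z-w)} = I$, and then invoke Theorem \ref{thm:max.dom.in.right.Harte.spec}. Both are valid; yours is arguably cleaner in that it avoids re-running the dilation/Taylor-approximation argument at the exterior point $w$ and localizes all the work in a single Leibniz estimate, at the cost of leaning on the spectral inclusion from Section 2. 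Either way the theorem is fully proved.
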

\begin{proof}
    Fix $r < 1$.
    Since $|F^{(k)}|^2$ is subharmonic for each $0 \leq k \leq n$, we get that
    \begin{equation} \label{eqn:H_v,n.has.P2}
        |F^{(k)}(z)|^2 \le \frac{4}{(1-r)^2} \int_{D\left(z,\frac{1-r}{2}\right)} |F^{(k)}|^2 dA \lesssim \frac{\delta^{-1}_{\frac{1+r}{2}}}{(1-r)^2} \|F\|^2_{v,n} \foral z \in r \bD,
    \end{equation}
    where we use \eqref{eqn:v.prop.1} for the second inequality. 
    Hence, by the standard normal families argument together with Fatou's lemma, we get that $\cH_{v, n}$ is indeed a Hilbert space. Also, \eqref{eqn:H_v,n.has.P2} for $k = 0$ shows that $\cH_{v,n}$ satisfies \ref{item:P2}. It remains to check that \ref{item:P1} and \ref{item:P3} hold, and that $\Omega_{max} \subseteq \overline{\bD}$.

    First, we check \ref{item:P3}. It is clear from the definition of $\left\|\cdot\right\|_{v,n}$ that
    \begin{equation}\label{eqn:norm.asymp}
        \|F\|_{v,n} \asymp \sum_{k = 0}^n \| F^{(k)} \|_v.
    \end{equation}
    Also, one can inductively check that
    \begin{equation}\label{eqn:induc.on.SF}
        \| (SF)^{(k)} \|_v \lesssim \|F\|_{v,n} \foral 0 \leq k \leq n.
    \end{equation}
    Combining \eqref{eqn:norm.asymp} with \eqref{eqn:induc.on.SF} gives us
    \begin{equation*}
        \|SF\|_{v,n} \lesssim \|F\|_{v,n} \foral F \in \cH_{v,n}
    \end{equation*}
    and thus, $\cH_{v,n}$ satisfies \ref{item:P3}. Next, we check \ref{item:P1}. Note that
    \begin{equation*}
        \|z^k\|_{v,n} \lesssim (1 + k)^{n} \foral k \in \bZ_+,
    \end{equation*}
    which implies that if $F \in \operatorname{Hol(R \bD)}$ for some $R > 1$, then its Taylor series converges absolutely in $\cH_{v,n}$. In particular, $F \in \cH_{v,n}$. It also implies that $\cP_1 \subset \cH_{v,n}$ and functions in $\operatorname{Hol}(R\bD)$ for $R > 1$ can be approximated in $\cH_{v,n}$ by polynomials. Thus, it is enough to show that any $F \in \cH_{v,n}$ can be approximated by its \emph{dilations}
    \begin{equation*}
        F_r : z \mapsto F(rz) \foral r < 1.
    \end{equation*}
    For all $0 \leq k \leq n$ and $r < 1$, we have
    \begin{equation*}
        (F_r)^{(k)}(z) = r^k F^{(k)}(rz) = r^k (F^{(k)})_r(z) \foral z \in \bD.
    \end{equation*}
    Then, using \eqref{eqn:norm.asymp}, it suffices to show that dilations converge in the norm for $L^2_a(\bD,v)$. This is a standard fact for weighted Bergman spaces, e.g., see \cite[Proposition 1.3]{HKZ00} and use \eqref{eqn:v.prop.2} above for $v$ instead of the weight they use.

    Lastly, suppose $w \in \Omega_{max} \setminus \overline{\bD}$. Note that $F(z) = 1/(z - w) \in \cH_{v,n}$ since it is holomorphic on a larger disk. We saw that this implies $F_r \to F$ in $\cH_{v,n}$ and that $F_r$ can be approximated by its Taylor series in $\cH$. In particular,
    \begin{equation*}
        \Lambda_w(F_r) = F(rw) \foral r < 1
    \end{equation*}
    and we get
    \begin{equation}\label{eqn:eval.at.F.in.terms.of.dilations}
        \Lambda_w(F) = \lim_{r \to 1} F(rw),
    \end{equation}
    which clearly diverges. Therefore, $\Omega_{max} \subseteq \overline{\bD}$ and the proof is complete.
\end{proof}

Let us use this general construction to find function spaces with $\Omega_{max} = \bD \cup \Gamma$ for some $\Gamma \subset \bT$ by considering appropriate weights $v$. The goal is to make $v$ vanish quickly around $w \in \bT \setminus \Gamma$ and bounded below in some neighborhood of $w \in \Gamma$.

\begin{theorem} \label{thm: closed Gamma}
    Suppose $\Gamma \subset \bT$ is a closed set. Then, there exists a Hilbert space $\cH$ satisfying \ref{item:P1}-\ref{item:P3} on $\bD$ such that $\Omega_{max} = \bD \cup \Gamma$.
\end{theorem}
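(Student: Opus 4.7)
The plan is to apply Theorem~\ref{thm: v n spaces} with a weight $v$ built from a union of non-tangential cones adapted to $\Gamma$. Fix $n = 2$ and $\gamma > 2n$, and fix $\epsilon \in (0, 1)$. For each $w \in \bT$ define the Stolz-type cone
\[
    C_\epsilon(w) := \{z \in \bD : |z - tw| \leq \epsilon(1 - t) \text{ for some } t \in [0, 1]\},
\]
a closed convex subset of $\bD$ that tapers from the disk $\{|z| \leq \epsilon\}$ at its base to the single boundary point $w$ at its tip. Set $C_\epsilon^\Gamma := \bigcup_{w \in \Gamma} C_\epsilon(w)$; this is closed in $\bD$ because its defining function $z \mapsto \inf_{(w, t) \in \Gamma \times [0, 1]} (|z - tw| - \epsilon(1 - t))$ is continuous on $\bD$ (the infimum is over the compact set $\Gamma \times [0, 1]$). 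Finally put
\[
    v(z) := \max\!\big((1 - |z|)^\gamma,\ \chi_{C_\epsilon^\Gamma}(z)\big).
\]

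Next I verify that $v$ meets the hypotheses of Theorem~\ref{thm: v n spaces}: since $v \leq 1$, $v \in L^1(\bD)$; the bound $v \geq (1 - r)^\gamma$ on $r\bD$ gives \eqref{eqn:v.prop.1}; and for \eqref{eqn:v.prop.2} I check that each $C_\epsilon(w)$ is stable under $z \mapsto rz$ (for $r \in (0, 1]$), since if $z = tw + \zeta$ with $|\zeta| \leq \epsilon(1 - t)$ then $rz = (rt)w + (r \zeta)$ with $|r \zeta| \leq r \epsilon(1 - t) \leq \epsilon(1 - rt)$. Combining this with $(1 - r|z|)^\gamma \geq (1 - |z|)^\gamma$ yields $v(rz) \geq v(z)$. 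Hence $\cH := \cH_{v, n}$ is a Hilbert space satisfying \ref{item:P1}--\ref{item:P3} with $\Omega_{max} \subseteq \overline{\bD}$. For each $w \in \Gamma$, the cone $C_\epsilon(w)$ is a bounded planar Lipschitz domain whose closure contains $w$, and $v \equiv 1$ there, so every $F \in \cH$ satisfies $\sum_{k = 0}^n \|F^{(k)}\|_{L^2(C_\epsilon(w))}^2 \leq \|F\|_{v, n}^2$, i.e., $F \in W^{n, 2}(C_\epsilon(w))$. The planar Sobolev embedding $W^{2, 2} \hookrightarrow C^0$ on bounded Lipschitz domains then gives $|F(w)| \leq C_w \|F\|_{v, n}$, so $\Lambda_w \in \cH^*$ for every $w \in \Gamma$.

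It remains to prove that $\Omega_{max} \cap (\bT \setminus \Gamma) = \emptyset$. The key geometric claim is $\overline{C_\epsilon^\Gamma} \cap \bT = \Gamma$: if $z_n = t_n w_n + \zeta_n \in C_\epsilon(w_n)$ with $w_n \in \Gamma$, $|\zeta_n| \leq \epsilon(1 - t_n)$, and $z_n \to z_0 \in \bT$, the bound $|z_n| \leq \epsilon + (1 - \epsilon) t_n$ forces $t_n \to 1$ (here $\epsilon < 1$ is essential), whence $\zeta_n \to 0$ and $z_0 = \lim w_n \in \Gamma$ by closedness of $\Gamma$. Conversely, $w = \lim_{t \to 1^-} tw \in \overline{C_\epsilon(w)}$ for each $w \in \Gamma$. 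For $w \in \bT \setminus \Gamma$, this places $|z - w|$ bounded below on $C_\epsilon^\Gamma$, and one directly computes that for $F(z) := 1/(z - w)$,
\[
    \|F^{(k)}\|_v^2 \lesssim \int_{C_\epsilon^\Gamma} \frac{dA}{|z - w|^{2k + 2}} + \int_{\bD \setminus C_\epsilon^\Gamma} \frac{(1 - |z|)^\gamma\,dA}{|z - w|^{2k + 2}} < \infty \qquad (0 \leq k \leq n),
\]
the first integrand being bounded and the second integrable by polar coordinates near $w$ (using $\gamma > 2n$). So $F \in \cH$, and since $F(rw) \to \infty$ as $r \to 1^-$, the argument in the proof of Theorem~\ref{thm: v n spaces} gives $w \notin \Omega_{max}$. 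The main technical point I foresee is the geometric claim $\overline{C_\epsilon^\Gamma} \cap \bT = \Gamma$, which hinges crucially on the pinching provided by $\epsilon < 1$ and on the closedness of $\Gamma$; once this is in hand, everything else is routine bookkeeping between weighted $L^2$ norms and the Sobolev embedding.
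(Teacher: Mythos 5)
Your construction is correct, but it is genuinely different from the paper's. The paper also works inside $\cH_{v,2}$, but with the weight $v(z) = \exp\left(-\dist(z/|z|,\Gamma)/(1-|z|)\right)$, which is merely \emph{comparable} to $1$ on Stolz angles at points of $\Gamma$; boundedness of $\Lambda_\gamma$ for $\gamma \in \Gamma$ is then extracted by hand, via subharmonicity of $|P^{(2)}|^2$ on discs inscribed in the Stolz angle (giving $|P^{(2)}(z)| \lesssim \|P\|_{v,2}/(1-|z|)$ along the radius $[0,\gamma)$) followed by two applications of the fundamental theorem of calculus, using that $\log(1-t)$ is integrable on $[0,1)$. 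You instead make $v \equiv 1$ on a union of cones $C_\epsilon^\Gamma$ and invoke the Sobolev embedding $W^{2,2} \hookrightarrow C^0$ on the convex (hence Lipschitz) cone $C_\epsilon(w)$, which delivers $|P(w)| \lesssim \|P\|_{v,2}$ in one stroke; your verification of \eqref{eqn:v.prop.1}--\eqref{eqn:v.prop.2}, the measurability of $\chi_{C_\epsilon^\Gamma}$ via closedness of $C_\epsilon^\Gamma$, and the key geometric fact $\overline{C_\epsilon^\Gamma} \cap \bT = \Gamma$ are all sound. For excluding $w \in \bT \setminus \Gamma$ both proofs reduce, via convergence of dilations from Theorem \ref{thm: v n spaces}, to showing $1/(z-w) \in \cH$; the paper exploits the exponential decay $v \lesssim e^{-d/2(1-|z|)}$ near $w$, while you use the polynomial decay $(1-|z|)^\gamma \leq |z-w|^\gamma$ with $\gamma > 2n$, which is equally valid. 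What each approach buys: yours is shorter and more conceptual, at the cost of importing the Sobolev embedding on Lipschitz domains as a black box (plus the routine identification of complex derivatives with real weak derivatives); the paper's is self-contained and, more importantly, produces the explicit quantitative estimate \eqref{eqn:estimate.1/(z-w)}, $\|1/(z-w)^3\|_v^2 \lesssim \dist(w,\Gamma)^{-6}$, which is reused verbatim in the proof of Theorem \ref{thm: Fs and Gd Gamma}. If your weights were substituted there, one would need to re-derive an analogous bound (your construction does give $\|1/(z-w)^3\|_v^2 \lesssim 1 + \dist(w,\Gamma)^{-6}$, since $\dist(w, \overline{C_\epsilon^\Gamma}) \gtrsim_\epsilon \dist(w,\Gamma)$, so the summing argument would still go through), but as a proof of the stated theorem your argument is complete. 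Two cosmetic points: $C_\epsilon(w)$ is a subset of $\overline{\bD}$ rather than of $\bD$ (its tip lies on $\bT$), and the Sobolev constant is actually uniform in $w \in \Gamma$ by rotation invariance, so $C_w$ can be taken independent of $w$.
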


\begin{proof}
    We consider $\cH = \cH_{v, 2}$ with $v(z) = \exp\left(- \frac{\dist(z/|z|, \Gamma)}{1 - |z|}\right)$. Note that $v$ satisfies \eqref{eqn:v.prop.1} and \eqref{eqn:v.prop.2}, so we only need to check that $\Gamma = \Omega_{max} \cap \bT$.

    First, suppose $\gamma \in \Gamma$. We need to show $\gamma \in \Omega_{max}$. Fix an angle $0 < \alpha < \pi/2$ and a radius $\rho < 1/2$, and consider the \emph{Stolz angle} $S_{\alpha,\rho}$ at $\gamma$, i.e,
    \begin{equation*}
        S_{\alpha,\rho} = \left\{ \gamma - r e^{i \theta} : 0 < r < \rho \AND |\theta| < \alpha \right\}.
    \end{equation*}
    For $z \in S_{\alpha,\rho}$,
    \begin{equation*}
        \dist\left(\frac{z}{|z|}, \Gamma\right) \le \left\lvert\frac{z}{|z|} - \gamma\right\rvert \lesssim 1 - |z|.
    \end{equation*}
    Hence, $v \gtrsim 1$ on $S_{\alpha,\rho}$. By $[0,w)$ we mean a line segment joining $0$ and $w$. For $z \in [0, \gamma)$, consider the disc $D(z, r_z) \subset S_{\alpha,\rho}$ with the largest possible radius $r_z$. For any $P \in \cP_1$ and $z \in [0,\gamma)$, the subharmonicity of $|P^{(2)}|^2$ shows that
    \begin{equation*}
        |P^{(2)}(z)|^2 \le \frac{1}{r_z^2} \int_{D(z, r_z)} |P^{(2)}|^2 dA. 
    \end{equation*}
    Then, since $v \gtrsim 1$ on $S_{\alpha,\rho}$ and $r_z \asymp 1 - |z|$, we get
    \begin{equation*}
        |P^{(2)}(z)| \lesssim \frac{1}{1 - |z|} \|P\|_{v,2}.
    \end{equation*}
    Applying the fundamental theorem of calculus to $P^{(2)}$ on $[0,z)$ and using \eqref{eqn:H_v,n.has.P2} with $k = 1$ and $z = 0$, we obtain
    \begin{equation*}
        |P^{(1)}(z)| \lesssim (1 - \log(1 - |z|)) \|P\|_{v,2} \foral z \in [0,\gamma).
    \end{equation*}
    Doing this once more with $P^{(1)}$ and noting that $\log (1 - t) \in L^1([0,1))$, we get
    \begin{equation*}
        |P(\gamma)| \lesssim \|P\|_{v,2}.
    \end{equation*}
    Since $P \in \cP_1$ was arbitrarily chosen, it follows that $\gamma \in \Omega_{max}$.

    All we need to show now is that if $w \in \bT \setminus \Gamma$, then $w \notin \Omega_{max}$. By the convergence of dilations (see the discussion surrounding \eqref{eqn:eval.at.F.in.terms.of.dilations}), it is enough to conclude that $F(z) = 1/(z - w) \in \cH$ for all $w \in \bT \setminus \Gamma$. Note that
    \begin{equation*}
        \|F\|_{v,2}^2 \lesssim \left\| \frac{1}{(z-w)^3} \right\|_v^2.
    \end{equation*}
    Hence, it is sufficient to show that the RHS above is finite.
    Since $w \notin \Gamma$ and $\Gamma$ is closed, we have $\dist(w, \Gamma) = d > 0$. We consider two regions
    \begin{equation*}
        D_1 := \{ z \in \bD : |z - w| < d/2 \} \AND D_2 := \bD \setminus D_1,
    \end{equation*}
    and estimate $\| 1/(z-w)^3 \|_v^2$ separately over $D_1$ and $D_2$. First, note that
    \begin{equation}\label{eqn:approx.D_2}
       \int_{D_2} \frac{1}{|z - w|^6} v(z) dA(z) \lesssim \frac{1}{d^6} \int_{D_2} v(z) dA(z) \lesssim \frac{1}{d^6}.  
    \end{equation}
    Now, $z \in D_1$ and $\dist(w, \Gamma) = d$ implies $\dist(z, \Gamma) \ge d/2$. Thus,
    \begin{align*}
        \int_{D_1} \frac{1}{|z - w|^6} v(z) dA(z) &\leq \int_{D_1} \frac{1}{(1 - |z|)^6} e^{-\frac{d}{2(1 - |z|)}} dA(z) \\
        &\leq \int_{\bD} \frac{1}{(1 - |z|)^6} e^{-\frac{d}{2(1 - |z|)}} dA(z) \\
        &\leq \int_0^1 \frac{1}{(1-r)^6} e^{- \frac{d}{2(1-r)}} dr \lesssim \frac{1}{d^6}.
    \end{align*}
    Combining this with \eqref{eqn:approx.D_2}, we conclude the proof since
    \begin{equation}\label{eqn:estimate.1/(z-w)}
        \left\| \frac{1}{(z-w)^3} \right\|_v^2 \lesssim \frac{1}{(\dist(w, \Gamma))^6} < \infty. \qedhere
    \end{equation}
    \end{proof}

Let us show how this construction extends to $\Gamma \subset \bT$ that is both $F_\sigma$ and $G_\delta$.

\begin{theorem} \label{thm: Fs and Gd Gamma}
    Suppose $\Gamma \subset \bT$ is both an $F_\sigma$ set and a $G_\delta$ set. Then, there exists a Hilbert space $\cH$ satisfying \ref{item:P1}-\ref{item:P3} on $\bD$ such that $\Omega_{max} = \bD \cup \Gamma$.
\end{theorem}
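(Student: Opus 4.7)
The plan is to realize $\cH$ as a weighted infinite intersection of Hilbert spaces of the form $\cH_{v_m, 2}$ from Theorem \ref{thm: v n spaces}, each one tailored to one piece of a bilateral decomposition of $\Gamma$. Since $\Gamma$ is $F_\sigma$, I write $\Gamma = \bigcup_m K_m$ with each $K_m$ closed; since $\Gamma$ is also $G_\delta$, its complement is $F_\sigma$, so I write $\bT \setminus \Gamma = \bigcup_m L_m$ with each $L_m$ closed. Replacing $K_m, L_m$ by their unions up to index $m$, I may take both sequences increasing. As $K_m$ and $L_m$ are disjoint compact subsets of $\bT$, $\delta_m := \dist(K_m, L_m) > 0$. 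For each $m$, I set
\[
    \phi_m(\zeta) := \frac{\dist(\zeta, K_m)}{\dist(\zeta, K_m) + \dist(\zeta, L_m)}, \quad \zeta \in \bT,
\]
which takes values in $[0, 1]$, is Lipschitz with constant $L_m \lesssim 1/\delta_m$, and satisfies $\phi_m^{-1}(0) = K_m$ and $\phi_m^{-1}(1) = L_m$. Then I define $v_m(z) := \exp(-\phi_m(z/|z|)/(1-|z|))$ on $\bD$; one checks that each $v_m$ satisfies both \eqref{eqn:v.prop.1} and \eqref{eqn:v.prop.2}.

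Putting $\cH_m := \cH_{v_m, 2}$, I define
\[
    \cH := \left\{ F \in \operatorname{Hol}(\bD) : \|F\|_\cH^2 := \sum_m a_m \|F\|_{v_m, 2}^2 < \infty \right\}
\]
with positive weights chosen so that $\sum_m a_m \delta_m^{-6} < \infty$ --- for instance $a_m = 2^{-m} \delta_m^6$, which in particular forces $\sum_m a_m < \infty$. Completeness of $\cH$ follows by the usual normal-families argument combined with Fatou's lemma, and the conditions \ref{item:P1}--\ref{item:P3} are inherited from the individual $\cH_m$: \ref{item:P2} from the continuous inclusion $\cH \hookrightarrow \cH_1$; \ref{item:P3} from the $m$-independent bound $\|SF\|_{v_m, 2} \lesssim \|F\|_{v_m, 2}$ (the constant in \eqref{eqn:induc.on.SF} depends only on $n = 2$); and \ref{item:P1} from dominated convergence of $F_r \to F$ in each $\cH_m$ (using $\|F_r\|_{v_m, 2} \lesssim \|F\|_{v_m, 2}$ uniformly in $m$, derived from \eqref{eqn:v.prop.2}) together with polynomial approximation of each dilation $F_r$ by its Taylor partial sums.

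To identify $\Omega_{max}(\cH) = \bD \cup \Gamma$, inclusion of $\bD$ is immediate and exclusion of points outside $\overline{\bD}$ repeats the argument at the end of Theorem \ref{thm: v n spaces}. For $\gamma \in \Gamma$ I pick $m$ with $\gamma \in K_m$; in a Stolz angle at $\gamma$ one has $|z/|z| - \gamma| \lesssim 1 - |z|$, so the Lipschitz bound yields $\phi_m(z/|z|) \lesssim L_m(1-|z|)$ and hence $v_m \gtrsim 1$ on that Stolz angle with an $m$-dependent constant. Replaying the subharmonicity argument from the proof of Theorem \ref{thm: closed Gamma} then gives $|P(\gamma)| \lesssim \|P\|_{v_m, 2} \lesssim a_m^{-1/2} \|P\|_\cH$. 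For $w \in \bT \setminus \Gamma$, let $j$ be the least index with $w \in L_j$; for $m \geq j$ one has $\phi_m(w) = 1$, and the angular neighborhood $\{\zeta \in \bT : |\zeta - w| \leq 1/(2L_m)\}$ lies inside $\{\phi_m \geq 1/2\}$. Splitting the integrals defining $\|1/(z-w)\|_{v_m, 2}^2$ along this neighborhood (analogously to \eqref{eqn:approx.D_2}--\eqref{eqn:estimate.1/(z-w)}) yields $\|1/(z-w)\|_{v_m, 2}^2 \lesssim 1 + \delta_m^{-6}$, while for the finitely many indices $m < j$ each norm is merely finite. The summability $\sum_m a_m \delta_m^{-6} < \infty$ therefore gives $F_w(z) := 1/(z-w) \in \cH$, and the dilation argument from \eqref{eqn:eval.at.F.in.terms.of.dilations} then excludes $w$ from $\Omega_{max}(\cH)$.

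The hard part will be the quantitative $m$-control: as $m \to \infty$ both $\delta_m \to 0$ and $L_m \to \infty$ are possible, so neither the Stolz lower bound on $v_m$ at $\gamma \in K_m$ nor the bound $\|1/(z-w)\|_{v_m, 2}^2 \lesssim 1 + \delta_m^{-6}$ at $w \in L_m$ is uniform in $m$. The weights $a_m$ must therefore be chosen small enough --- hence the factor $\delta_m^6$ --- to absorb all such losses while remaining strictly positive and summable, so that neither a point of $\bT \setminus \Gamma$ slips back into $\Omega_{max}(\cH)$ nor a point of $\Gamma$ is lost.
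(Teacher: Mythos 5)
Your proof is correct, and at its core it follows the same strategy as the paper: decompose $\Gamma = \bigcup_m K_m$ and $\bT \setminus \Gamma = \bigcup_m L_m$ into closed sets, superpose exponentially decaying weights of the type used for a single closed $\Gamma$ (Theorem \ref{thm: closed Gamma}) with summable coefficients tuned to the relevant distances, and run everything through the $\cH_{v,2}$ machinery of Theorem \ref{thm: v n spaces}. (Indeed, by Tonelli your norm $\sum_m a_m \|F\|_{v_m,2}^2$ is exactly $\|F\|_{v,2}^2$ for $v = \sum_m a_m v_m$, so \ref{item:P1}--\ref{item:P3} follow directly from Theorem \ref{thm: v n spaces} and your separate verification is redundant.) The one genuine difference is your choice of weight: you use the bilateral Urysohn function $\phi_m = \dist(\cdot,K_m)/(\dist(\cdot,K_m)+\dist(\cdot,L_m))$, which vanishes exactly on $K_m$ and equals $1$ exactly on $L_m$, whereas the paper uses the one-sided weight $\exp(-\dist(z/|z|,\Gamma_k)/(1-|z|))$ and therefore must control, for each $w$ in some $C_l$, the distances $d_{k,l} = \dist(\Gamma_k,C_l)$ for \emph{all} $k$ simultaneously; this is what forces the paper's product formula $a_k = 2^{-k}\prod_{j=0}^{k}(d_{k,j}/2)^6$. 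Your bilateral weight encodes both sets at once, so a single distance $\delta_m = \dist(K_m,L_m)$ per index suffices and the coefficient choice collapses to the cleaner $a_m = 2^{-m}\delta_m^6$; the price is that you must redo the Stolz-angle lower bound on $v_m$ near $\gamma \in K_m$ (via the Lipschitz estimate $\Lip(\phi_m) \le 1/\delta_m$, which does hold) rather than quoting Theorem \ref{thm: closed Gamma} verbatim, and you must note that for the finitely many indices $m$ with $w \notin L_m$ one still has $\phi_m(w) > 0$ (since $w \notin K_m \subseteq \Gamma$), so those norms are finite. Both versions work; yours trades a slightly more elaborate weight for a noticeably simpler bookkeeping of the coefficients.
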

\begin{proof}
    Since $\Gamma$ is $F_{\sigma}$, we have $\Gamma = \cup_k \Gamma_k$ for some closed $\Gamma_k \subset \bT$. For each $\Gamma_k$, let $v_k$ be the weight from Theorem \ref{thm: closed Gamma}. Now, define $\cH$ to be $\cH_{v, 2}$ with $v = \sum_{k} a_k v_k$ for some $a_k > 0$ with $\sum_k a_k < \infty$, which will be specified later. It follows that $v$ satisfies \eqref{eqn:v.prop.1} and \eqref{eqn:v.prop.2}, so we only need to show that $\Omega_{max} \cap \bT = \Gamma$.

    First, note that
    \begin{equation*}
        \|F\|^2_{v_k,2} \leq \frac{1}{a_k} \|F\|^2_{v,2} \foral k \in \bZ_+.
    \end{equation*}
    In light of Theorem \ref{thm: closed Gamma}, this means that $\cH \subset \cH_{v_k,2}$ boundedly and thus, each $\Gamma_k \subset \Omega_{max}$. Since this is true for every $k \in \bZ_+$, we conclude that $\Gamma \subset \Omega_{max}$.

    Now, we only need to choose $a_n$'s such that $w \in \bT \setminus \Gamma$ implies $w \notin \Omega_{max}$. $\Gamma$ being $G_{\delta}$ means that $\bT \setminus \Gamma = \cup_l C_l$, for some closed $C_l \subset \bT$. Since $\Gamma_k$'s and $C_l$'s do not intersect, we have that
    \begin{equation*}
        \dist(\Gamma_k, C_l) =: d_{k,l} > 0 \foral k,l \in \bZ_+.
    \end{equation*}
    As in Theorem \ref{thm: closed Gamma}, it suffices to show that $F(z) = 1/(z - w) \in \cH$ for each $w \in \bT \setminus \Gamma$. From \eqref{eqn:estimate.1/(z-w)}, we know that
   \begin{equation*}
        \|F\|_{v,2}^2 \lesssim \sum_{k \in \bZ_+} \frac{a_k}{(\dist(w, \Gamma_k))^6}.
    \end{equation*}
    Thus, it suffices to have
    \begin{equation}\label{eqn:condition.on.a_k}
        \sum_{k \in \bZ_+} \frac{a_k}{d_{k,l}^6} < \infty \foral l \in \bZ_+.
    \end{equation}
    Let us now specify the $a_k$'s that achieve this. Set
    \begin{equation*}
        a_k = \frac{1}{2^k} \prod_{j = 0}^{k} \left( \frac{d_{k,j}}{2} \right)^6 \foral k \in \bZ_+.
    \end{equation*}
    Clearly, $a_k > 0$ and $\sum_k a_k < \infty$ since $d_{k,l} \leq 2$ for each $k,l \in \bZ_+$. Finally, fixing $l \in \bZ_+$, we can estimate for $k \ge l$ that
    \begin{equation*}
        \frac{a_k}{d_{k,l}^6} \le \frac{1}{2^k} \left( \frac{d_{k,l}}{2} \right)^6 \frac{1}{d_{k,l}^6} \le \frac{1}{2^k}.
    \end{equation*}
    Thus, \eqref{eqn:condition.on.a_k} holds, and this completes the proof.
\end{proof}

\begin{remark}
    Examples of $\Gamma \subset \bT \cong [0,1)$ for which the above theorem works are: $(1)$ any closed/open set, e.g., finite collection of closed/open sub-arcs of $\bT$, the Cantor set and its complement, etc. $(2)$ any countable union of intervals with countably many limit points in the complement, e.g., any sequence with countably many limit points, etc. Since our argument crucially relies on $\Gamma$ also being $G_\delta$, we do not know if our construction generalizes to, for instance, $\Gamma = \bQ \cap [0,1)$. We therefore ask the following question.
\end{remark}

\begin{problem}\label{prob:dichotomy.max.dom}
    Does there exist a Hilbert/Banach space $\cH$ satisfying \ref{item:P1}-\ref{item:P3} on $\bD$ with $\Omega_{max} = \bD \cup \Gamma$ for any given $F_\sigma$ set $\Gamma \subset \bT$?
\end{problem}

\begin{remark}\label{rem:loc.con.sp.with.F_sigma.max.dom}
    The above problem can be solved for locally convex TVSs using a similar idea as the previous theorems. We provide a sketch of the argument here. Suppose $\Gamma = \cup_{k} \Gamma_k$ with $\Gamma_k$ closed, and consider the seminorms for $n,k \in \bN$
    \begin{equation*}
        p_n(f) = \sup_{z \in (\frac{n-1}{n})\bD} |f(z)| \AND q_k(f) = \sup_{z \in \Gamma_k} |f(z)|.
    \end{equation*}
    We now define $\cX$ as the locally convex TVS that is obtained by completing $\cP_1$ with these seminorms. 
    It is routine to check that $\cX$ satisfies \ref{item:P1}-\ref{item:P3} on $\bD$, and that $\bD \cup \Gamma \subseteq \Omega_{max}$. It remains to show that $\Omega_{max} \subseteq \bD \cup \Gamma$.
    
    Let $w \notin \bD \cup \Gamma$ and note that $f_r(z) = 1/(rz - w) \in \cX$ for each $r < 1$ using the absolute convergence of its Taylor series in each seminorm. Now, if $\Lambda_w$ extends continuously to $\cX$, then it is bounded by a finite linear combination of the seminorms. It remains to notice that $p_n(f_r)$ and $q_k(f_r)$ are bounded as $r \to 1^-$, while $|\Lambda_w(f_r)| \to \infty$, which gives us a contradiction. This means that $\Lambda_w$ is not continuous, that is, $w \notin \Omega_{max}$.
\end{remark}

\subsection*{Acknowledgements}

We would like to thank Alberto Dayan, Ramlal Debnath, Greg Knese, James Pascoe and Dimitrios Vavitsas for several helpful discussions. We also extend our gratitude to our advisor Orr Moshe Shalit for his support on this project. Lastly, we thank the anonymous referee for their valuable input in improving the content of this paper.

\printbibliography

\end{document}